
\documentclass{amsart}
\usepackage{hyperref}
\usepackage{enumerate} 
\usepackage{upref}
\usepackage{verbatim} 
\usepackage{color}
\usepackage{graphicx}
\usepackage{bbm}
\usepackage[textsize=tiny]{todonotes}


\newcommand*{\mailto}[1]{\href{mailto:#1}{\nolinkurl{#1}}}

\usepackage[latin1]{inputenc}
\usepackage{amssymb, amsmath, amsfonts, amsthm}
\usepackage[all]{xy}

\usepackage{marginnote}

\DeclareMathOperator{\id}{Id}

 \DeclareMathOperator{\arcsinh}{arcsinh}
\newcommand{\dott}{\, \cdot\,}

\newcommand{\Gr}{G}
\newcommand{\U}{\ensuremath{\mathcal{U}}}

\newcommand{\D}{\ensuremath{\mathcal{D}}}

\newcommand{\F}{\ensuremath{\mathcal{F}}}

\newcommand{\inv}{{^{-1}}}
\newcommand{\abs}[1]{\left\vert#1\right\vert}

\newcommand{\Real}{\mathbb R}

\newcommand{\indicator}{\mathbb I}
\newcommand{\norm}[1]{\left\Vert#1\right\Vert}

\newcommand{\muac}{\mu_{\text{\rm ac}}}

\DeclareMathOperator{\erf}{erf}

\newtheorem{theorem}{Theorem}[section]

\newtheorem{definition}[theorem]{Definition}

\newtheorem{example}[theorem]{Example}
\newtheorem{remark}[theorem]{Remark}

\numberwithin{equation}{section}

\allowdisplaybreaks

\begin{document}

\title[Metric for the HS equation]{A Lipschitz metric for the Hunter--Saxton equation}

\author[J. A. Carrillo]{Jos\'e Antonio Carrillo}
\address{Department of Mathematics\\ Imperial College London \\ South Kensington Campus\\ London SW7 2AZ\\ UK}
\email{\mailto{carrillo@imperial.ac.uk}}
\urladdr{\url{http://wwwf.imperial.ac.uk/~jcarrill/index.html}}

\author[K. Grunert]{Katrin Grunert}
\address{Department of Mathematical Sciences\\ NTNU Norwegian University of Science and Technology\\ NO-7491 Trondheim\\ Norway}
\email{\mailto{katring@math.ntnu.no}}
\urladdr{\url{http://www.math.ntnu.no/~katring/}}

\author[H. Holden]{Helge Holden}
\address{Department of Mathematical Sciences\\
  NTNU Norwegian University of Science and Technology\\
  NO-7491 Trondheim\\ Norway}
\email{\mailto{holden@math.ntnu.no}}
\urladdr{\url{http://www.math.ntnu.no/~holden/}}

\thanks{JAC was partially supported by the Royal Society via a Wolfson Research Merit Award. HH and KG acknowledge the support by the grant {\it Waves and Nonlinear Phenomena (WaNP)} from the Research Council of Norway. This research was done while the authors were at Institut Mittag-Leffler, Stockholm.}  
\subjclass[2010]{Primary: 35Q53, 35B35; Secondary: 35B60}
\keywords{Hunter--Saxton equation, Lipschitz metric, conservative solution}

\begin{abstract}
We analyze stability of conservative solutions of the Cauchy problem on the line for the (integrated) Hunter--Saxton (HS) equation.  Generically, the solutions of the HS equation develop singularities with steep gradients while preserving continuity of the solution itself. In order to obtain uniqueness, one is required to augment the equation itself by a measure that represents the associated energy, and the breakdown of the solution is associated with a complicated interplay where the measure becomes singular. 
The main result in this paper is the construction of a Lipschitz metric that compares two solutions of the HS equation with the respective initial data. The Lipschitz metric is based on the use of the Wasserstein metric.
\end{abstract}
\maketitle

\section{Introduction}

In this paper we consider the Cauchy problem for conservative solutions of the (integrated) Hunter--Saxton (HS) equation \cite{MR1135995}
\begin{equation}\label{eq:HSa}
u_t+uu_x = \frac14 \int_{-\infty}^x u_x^2(y)dy-\frac14\int_x^\infty u_x^2(y)dy, \quad u|_{t=0}=u_0.
\end{equation}
The equation has been extensively studied, starting with \cite{MR1361013, MR1361014}.   The initial value problem is not well-posed without further constraints: Consider the trivial case $u_0=0$ which clearly has as one solution $u(t,x)=0$. However, as can be easily verified, also 
\begin{equation}\label{eq:counter}
u(t,x)=-\frac{\alpha}4 t\, \indicator_{(-\infty, -\frac{\alpha}8 t^2)}(x)+ \frac{2x}{t}\, \indicator_{(-\frac{\alpha}8 t^2, \frac{\alpha}8 t^2)}(x)+ \frac{\alpha}4 t\, \indicator_{(\frac{\alpha}8 t^2, \infty)}(x)
\end{equation}
is a solution for any $\alpha\ge0$. Here  $\indicator_A$ is the indicator (characteristic) function of the set $A$.

Furthermore, it turns out that the solution $u$ of the HS equation may develop singularities in finite time in the following sense:  Unless the initial
data is monotone increasing, we find
\begin{equation}\label{eq:blow-up}
  \inf(u_x)\to-\infty \text{  as  } t\uparrow t^*=2/\sup(-u_0^\prime).
\end{equation}
Past wave breaking there are at least two different classes of solutions, denoted conservative (energy is conserved) and dissipative (where energy is removed locally) solutions, respectively, and this dichotomy is the source of the interesting behavior of solutions of the equation.  We will in this paper consider the so-called conservative case where an associated energy is preserved. 

Zhang and Zheng \cite{MR1668954,MR1701136,MR1799274} gave the first proof of global solutions of the HS equation on the half-line using Young measures and mollifications with compactly supported initial data. Their proof covered both the conservative case and the dissipative case.  Subsequently, Bressan and Constantin \cite{MR2191785}, using a clever rewrite of the equation in terms of new variables, showed global existence of conservative solutions without the assumption of compactly supported initial data.  The novel variables turned the partial differential equation into a system of linear ordinary differential equations taking values in a Banach space, and where the singularities were removed. 
A similar, but considerably more complicated, transformation can be used to study the very closely related  Camassa--Holm equation, see \cite{MR2278406,HolRay:07}. 
The convergence of a numerical method to compute the solution of the HS equation can be found in \cite{HolKarRis:sub05}.

We note in passing that the original form of the HS equation is
\begin{equation*}
(u_t+uu_x)_x=\frac12 u_x^2,
\end{equation*}
and like most other researchers working on the HS equation, we prefer to work with an integrated version. However, in addition to \eqref{eq:HSa}, one may study, for instance,
\begin{equation*}
u_t+uu_x=\frac12\int_0^x u_x^2(y)dy,
\end{equation*}
and while the properties are mostly the same, the explicit solutions differ. 

Our aim here is to determine a Lipschitz metric $d$ that compares two solutions $u_1(t),u_2(t)$ at time $t$ with the corresponding initial data, i.e., 
\begin{equation*}
d(u_1(t),u_2(t))\le C(t)  d(u_1(0),u_2(0)),             
\end{equation*}
where $C(t)$ denotes some increasing function of time.  The existence of such a metric is clearly  intrinsically connected with the uniqueness question, and as we could see from  the  example where \eqref{eq:counter} as well as the trivial solution both satisfy the equation, this is not a trivial matter.  Unfortunately, none of the standard norms in $H^s$ or $L^p$ will work.  A Lipschitz metric was derived in \cite{BHR}, and we here offer an alternative metric that also provides a simpler and more efficient way to solve the initial value problem.   

Let us be now more precise about the notion of solution. We consider the Cauchy problem for the integrated and augmented HS equation, which, in the conservative case, is given by 
\begin{subequations}\label{eq:HS}
\begin{align}\label{HS:1}
u_t+uu_x& = \frac14 \int_{-\infty}^x d\mu-\frac14\int_x^\infty d\mu,\\ \label{HS:2}
\mu_t+(u\mu)_x& =0.
\end{align}
\end{subequations}
In order to study conservative solution, the HS equation \eqref{HS:1} is augmented by the second equation \eqref{HS:2} that keeps track of the energy. 
A short computation reveals that if the solution $u$ is smooth and $\mu=u_x^2$, then the equation \eqref{HS:2} is clearly satisfied.  In particular, it shows that the energy $\mu(t,\Real)=\mu(0,\Real)$ is constant in time.  However, the challenge is to treat the case without this regularity, and the proper way to do that is to let $\mu$ be a nonnegative and finite Radon measure.   When there is a blow-up in the spatial derivative of the solution (cf. \eqref{eq:blow-up}), energy is transferred from the absolutely continuous part of the measure to the singular part, and, after the blow-up, the energy is transferred back to the absolutely continuous part of the measure. Thus, we will consider the solution space consisting of all pairs $(u,\mu)$ such that 
\begin{equation*}
u(t,\dott)\in L^\infty(\Real), \quad u_x(t,\dott)\in L^2(\Real), \quad \mu(t,\dott)\in \mathcal{M}_+(\Real) \quad \text{ and }\quad d\muac=u_x^2 dx,
\end{equation*}
where $\mathcal{M}_+(\Real)$ denotes the set of all nonnegative  and finite Radon measures on $\Real$. 

We would like to identify a natural Lipschitz metric, which measures the distance between pairs $(u_i,\mu_i)$, $i=1,2$, of solutions. The Lipschitz metric constructed in \cite{BHR} (and extended to the two-component HS equation in \cite{anders,andersPhD}) is based on the reformulation of the HS equation in Lagrangian coordinates which at the same time linearizes the equation. However, there is an intrinsic non-uniqueness in  Lagrangian coordinates as there are several distinct ways to parametrize the particle trajectories for one and the same solution in the original, or Eulerian, coordinates. This has to be accounted for when one measures the distance between solutions in Lagrangian coordinates, as one has to identify different elements belonging to one and the same equivalence class. We denote this as relabeling. In addition, for this construction one not only needs to know the solution in Eulerian coordinates, but also in Lagrangian coordinates for all $t$.

The present approach is based on the fact that a natural metric for measuring distances between Radon measures (with the same total mass) is given through the Wasserstein (or Monge--Kantorovich) distance $d_W$, which in one dimension is defined with the help of pseudo inverses, see \cite{Vil03}. This tool has been used extensively in the field of kinetic equations \cite{LT,CTPE}, conservation laws \cite{MR2134955,CDL} and nonlinear diffusion equations \cite{CT03,CGT03,CDG06}. To be more precise, given two positive and finite Radon measures $\mu_1$ and $\mu_2$, where we for simplicity assume that 
$\mu_1(\Real)=\mu_2(\Real)=C$, let 
\begin{equation} \label{eq:1}
F_i(x)=\mu_i((-\infty,x)) \quad i=1,2,
\end{equation}
and define their pseudo inverses $\chi_i\colon[0,C]\to \Real$ as follows
\begin{equation*}
\chi_i(\xi)=\sup\{x\mid F_i(x)<\xi\}.
\end{equation*}
Then, we define 
\begin{equation*}
d_W(\mu_1,\mu_2)= \norm{\chi_1-\chi_2}_{L^1([0,C])}.
\end{equation*}
As far as the distance between $u_1$ and $u_2$ is concerned, we are only interested in measuring the ``distance in the $L^\infty$ norm". Thus we introduce the distance $d$ as follows 
\begin{equation*}
d((u_1,\mu_1),(u_2,\mu_2))= \norm{u_1(\chi_1(\dott))-u_2(\chi_2(\dott))}_{L^\infty([0,C])}+d_W(\mu_1,\mu_2).
\end{equation*}
For this to work, it is necessary that this metric behaves nicely with the time evolution. Thus as a first step, we are interested in determining the time evolution of both $\chi(t,x)$, the pseudo inverse of $\mu(t,x)$, and $u(t,\chi(t,x))$. 

Let $(u(t), \mu(t))$ be a weak conservative solution to the HS equation with total energy $\mu(t,\Real)=C$.
To begin with, we assume that $F(t,x)$ is \textit{strictly increasing and smooth}, which greatly simplifies the analysis. Recall that $\chi(t,\dott)\colon[0,C]\to \Real$ is given by 
\begin{equation*} 
\chi(t,\eta)=\sup\{x\mid\mu(t,(-\infty,x))<\eta\}=\sup\{x\mid F(t,x)<\eta\}.
\end{equation*}
According to the assumptions on $F(t,x)$, we have that $F(t,\chi(t,\eta))=\eta$ for all $\eta\in [0,C]$ and $\chi(t,F(t,x))=x$ for all $x\in\Real$. Direct formal calculations yield that 
\begin{subequations}\label{HS:34}
\begin{align}
\chi_t(t, F(t,x))+\chi_\eta(t, F(t,x))F_t(t,x)& =0,\\
\chi_\eta(t,F(t,x))F_x(t,x)&=1.
\end{align}
\end{subequations}
Recalling \eqref{HS:2} and the definition of $F(t,x)$, we have 
\begin{subequations}\label{HS:5}
\begin{align}
F_x(t,x)&=\mu(t,x), \\
F_t(t,x)&=\int_{-\infty}^xd\mu_t(t)=-\int_{-\infty}^xd(u(t)\mu(t))_x=  -u(t,x)\mu(t,x).
\end{align}
\end{subequations}
Thus combining \eqref{HS:34} and \eqref{HS:5}, we obtain 
\begin{align*}
\chi_t(t,F(t,x))&=-\chi_\eta(t, F(t,x))F_t(t,x)= \chi_\eta(t, F(t,x))u(t,x)\mu(t,x)\\
&= \chi_\eta(t, F(t,x)) F_x(t,x) u(t,x)=u(t,x).
\end{align*}
Introducing $\eta=F(t,x)$, we end up with
\begin{equation*}
\chi_t(t,\eta)= u(t,\chi(t,\eta)),
\end{equation*}
where we again have used that $\chi(t,F(t,x))=x$ for all $x\in\Real$. 
As far as the time evolution of $\U(t,\eta)=u(t,\chi(t,\eta))$ is concerned, we have 
\begin{align*}
\U_t(t,\eta)& = u_t(t,\chi(t,\eta))+u_x(t,\chi(t,\eta))\chi_t(t,\eta)\\
&= u_t(t,\chi(t,\eta))+uu_x(t,\chi(t,\eta)) \notag\\ 
& = \frac14 \int_{-\infty}^{\chi(t,\eta)}d\mu(t,\sigma)-\frac14 \int_{\chi(t,\eta)}^\infty d\mu(t,\sigma)\notag\\ 
& = \frac12 \int_{-\infty}^{\chi(t,\eta)}d\mu(t,\sigma)-\frac14 C\notag\\
& = \frac12 F(t,\chi(t,\eta))-\frac14 C\notag\\
& =\frac12 \eta-\frac14 C.\notag
\end{align*}
Thus we get the very simple system of ordinary differential equations
\begin{subequations}\label{HS:ode}
\begin{align}
\chi_t(t,\eta)&= \U(t,\eta), \\
\U_t(t,\eta)&  =\frac12 \eta-\frac14 C.
\end{align}
\end{subequations}
The global solution of the initial value problem is simply given by 
\begin{equation}\label{eq:10}
\{(\chi(t,\eta),t, \U(t,\eta))\in\Real^3 \mid t\in (0,\infty), \quad \eta\in [0,C]\}.
\end{equation}
The above derivation is only of formal character,  and this derivation is but valid if $F(t,x)$ is strictly increasing and smooth. 
However, it turns out that the simple result \eqref{HS:ode} also persists in the general case, but the proof is considerably more difficult, and is the main result of this paper. 

\medskip
We prove two results. The first result, Theorem  \ref{thm:existence}, describes a simple and explicit formula for conservative solutions of the Cauchy problem.
Let $u_0\in H^1(\Real)$ and $\mu_0$ be a nonnegative, finite Radon measure with $C=\mu_0(\Real)$.   Define
\begin{subequations}\label{HS:ini0}
\begin{align}
\chi_0(\eta)&=\sup\{x\mid \mu_0((-\infty,x))<\eta\}, \\
\U_0(\eta)&=u_0(\chi_0(\eta)).
\end{align}
\end{subequations}
If $\lim_{\eta\to 0}\chi_0(\eta)=-\lim_{\eta\to C}\chi_0(\eta)=-\infty$ (all other cases are treated in Theorem~\ref{thm:existence}), we define
\begin{subequations}\label{HS:ini01}
\begin{align}
\chi(t,\eta)&=
\frac{t^2}{4}(\eta-\frac{C}2)+t\,\U_0(\eta)+\chi_0(\eta), \quad \text{if $\eta\in(0,C)$}, \\  
\U(t,\xi)&=
\frac{t}{2}(\eta-\frac{C}2)+\U_0(\eta), \quad \text{if  $\eta\in(0,C)$}.
\end{align}
\end{subequations}
Then we have
\begin{multline*}
\{(x,t,u(t,x))\in \Real^3\mid t\in[0,\infty),\, x\in\Real \} \\
   =\{(\chi(t,\eta),t,\U(t,\eta))\in \Real^3\mid t\in[0,\infty),\, \eta\in(0,C) \},
\end{multline*}
where $u=u(t,x)$ denotes the conservative solution of the HS equation \eqref{eq:HS}. 

The second result, Theorem \ref{thm:main}, describes the Lipschitz metric. Let $u_{0,j}\in H^1(\Real)$ 
and $\mu_{0,j}$ be a nonnegative, finite Radon measure with $C_j=\mu_{0,j}(\Real)$ for $j=1,2$, and define
$\chi_j(t,\eta)$ and $\U_j(t,\eta)$ by \eqref{HS:ini0} and \eqref{HS:ini01} for $j=1,2$ where $u_0$ is replaced by $u_{0,j}$ and 
$\mu_{0}$ is replaced by $\mu_{0,j}$, respectively.  Next introduce
$\hat\chi_j(t,\eta)=\chi_j(t,C_j\eta) $ and $\hat \U_j(t,\eta)=\U_j(t,C_j\eta)$ for $j=1,2$.
Define
\begin{align*}
d((u_1(t),\mu_1(t)), &(u_2(t),\mu_2(t))) \\
&\qquad = \norm{\hat\U_1(t,\dott)-\hat\U_2(t,\dott)}_{L^\infty([0,1])}\\
&\qquad\quad+\norm{\hat\chi_1(t,\dott)-\hat\chi_2(t,\dott)}_{L^1([0,1])}
+\abs{C_1-C_2}.
\end{align*}
Then we have, see Theorem \ref{thm:main}, that
\begin{align*}
d((u_1(t)&,\mu_1(t)) ,(u_2(t),\mu_2(t)))\\
& \leq (1+t+\frac18 t^2)d((u_1(0),\mu_1(0)),(u_2(0),\mu_2(0))).
\end{align*}


\section{The Lipschitz metric for the Hunter--Saxton equation}

Let us study the calculations \eqref{eq:1}--\eqref{eq:10} on two explicit examples.
 \begin{example}\label{rem:smooth}  
(i) Let
\begin{equation*}
u_0(x)=\Big(\frac{\pi}{2}\Big)^{1/2}\erf(\frac{x}{\sqrt{2}}), \quad \mu_0(x)=u_{0,x}^2(x)dx=e^{-x^2} dx,
\end{equation*}
where $\erf(x)=\frac{2}{\sqrt{\pi}}\int_0^x e^{-t^2}dt$ is the error function.
We find that 
\begin{equation*}
F_0(x)=\mu_0((-\infty,x))= \frac{\sqrt{\pi}}{2}(1+\erf(x))  
\end{equation*}
as well as $C=F_0(\infty)=\sqrt{\pi}$.
This implies that
\begin{align*}
\chi_0(\eta)&=\erf^{-1}\big(\frac{2}{\sqrt{\pi}}\eta-1 \big), \quad \eta\in(0,\sqrt{\pi}),\\
\U_0(\eta)&=\Big(\frac{\pi}{2}\Big)^{1/2} \erf\big(\frac{1}{\sqrt{2}}\erf^{-1}(\frac{2}{\sqrt{\pi}}\eta-1) \big),\quad \eta\in(0,\sqrt{\pi}).
\end{align*}
Considering the system of ordinary differential equations \eqref{HS:ode} with initial data $(\chi,\U)|_{t=0}=(\chi_0,\U_0)$, we find
\begin{align*}
\chi(t,\eta)&=\frac{t^2}{4}(\eta-\frac12 C)+\U_0(\eta)t+\chi_0(\eta) , \\
\U(t,\eta)&  =\frac{t}2(\eta-\frac12 C)+\U_0(\eta).
\end{align*}
See Figure \ref{HS_Smooth_erf}. Observe that it is not easy to transform this solution explicitly back to the original variable $u$.
\begin{figure}\centering
\includegraphics[width=10cm]{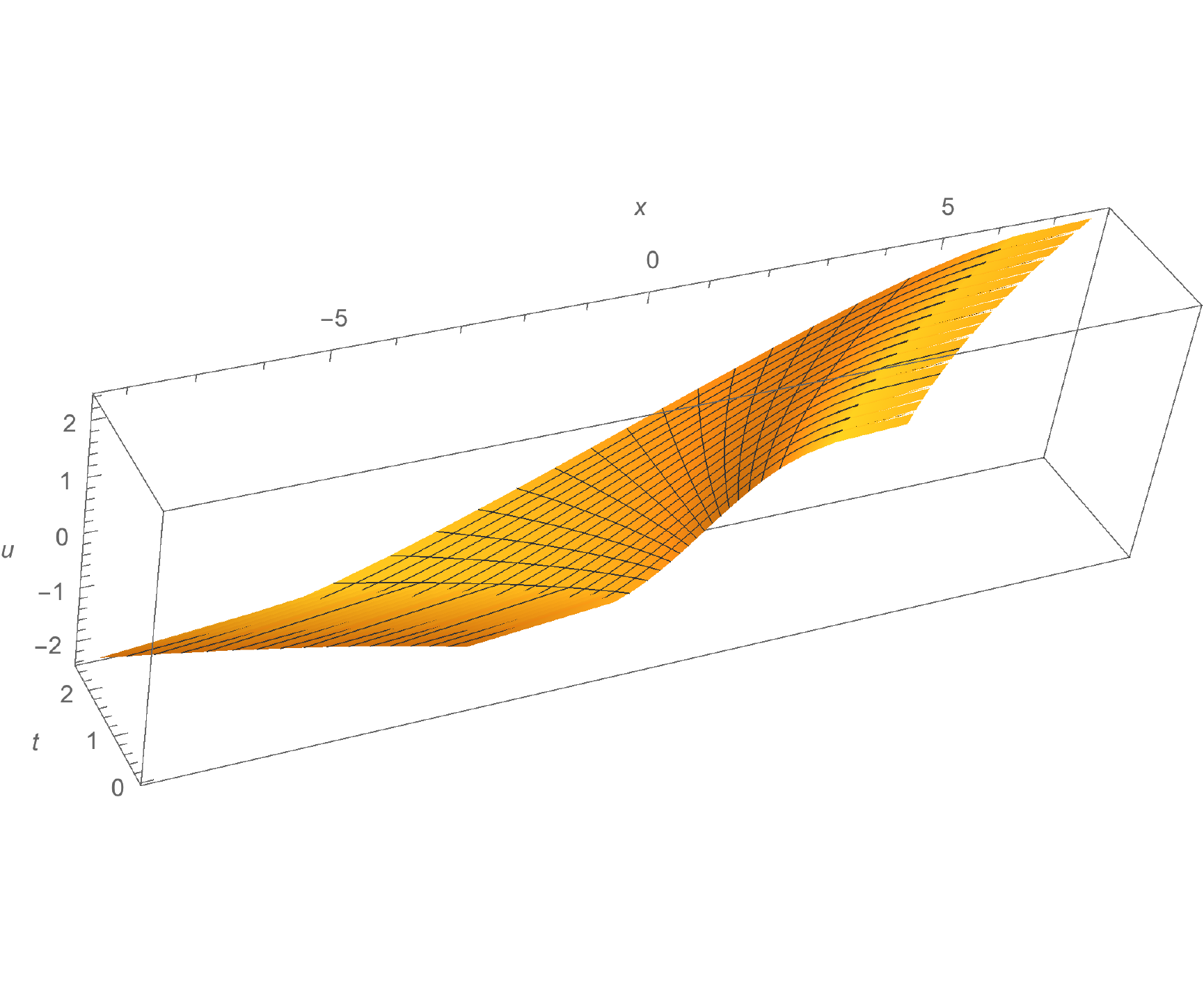}
\caption{The surface $\{(\chi,t,\U)\mid t\in[0,2.5], \, \eta\in(0,\sqrt{\pi})\}$ discussed in Example \ref{rem:smooth} (i).}
\label{HS_Smooth_erf}
\end{figure}

(ii) Let
\begin{equation*}
u_0(x)=\arcsinh(x), \quad \mu_0(x)=u_{0,x}^2(x)dx=\frac{dx}{1+x^2}.
\end{equation*}
Note that $u_0$ is not bounded, yet the same transformations apply.
We find that 
\begin{equation*}
F_0(x)=\mu_0((-\infty,x))= \arctan(x)+\frac\pi2 
\end{equation*}
as well as $C=F_0(\infty)=\pi$.
This implies that
$$
\chi_0(\eta)=\tan(\eta-\frac\pi2) \quad\text{ and } \quad \U_0(\eta)=\arcsinh(\tan(\eta-\frac\pi2)) \quad \text{ for } \eta\in(0,\pi).
$$
Here we find
\begin{align*}
\chi(t,\eta)&=\frac{t^2}{4}(\eta-\frac12 C)+\U_0(\eta)t+\chi_0(\eta) , \\
\U(t,\eta)&  =\frac{t}2(\eta-\frac12 C)+\U_0(\eta).
\end{align*}
See Figure \ref{HS_1solitonSmooth}. Again it is not easy to transform this solution explicitly back to the original variable $u$.
\begin{figure}\centering
\includegraphics[width=10cm]{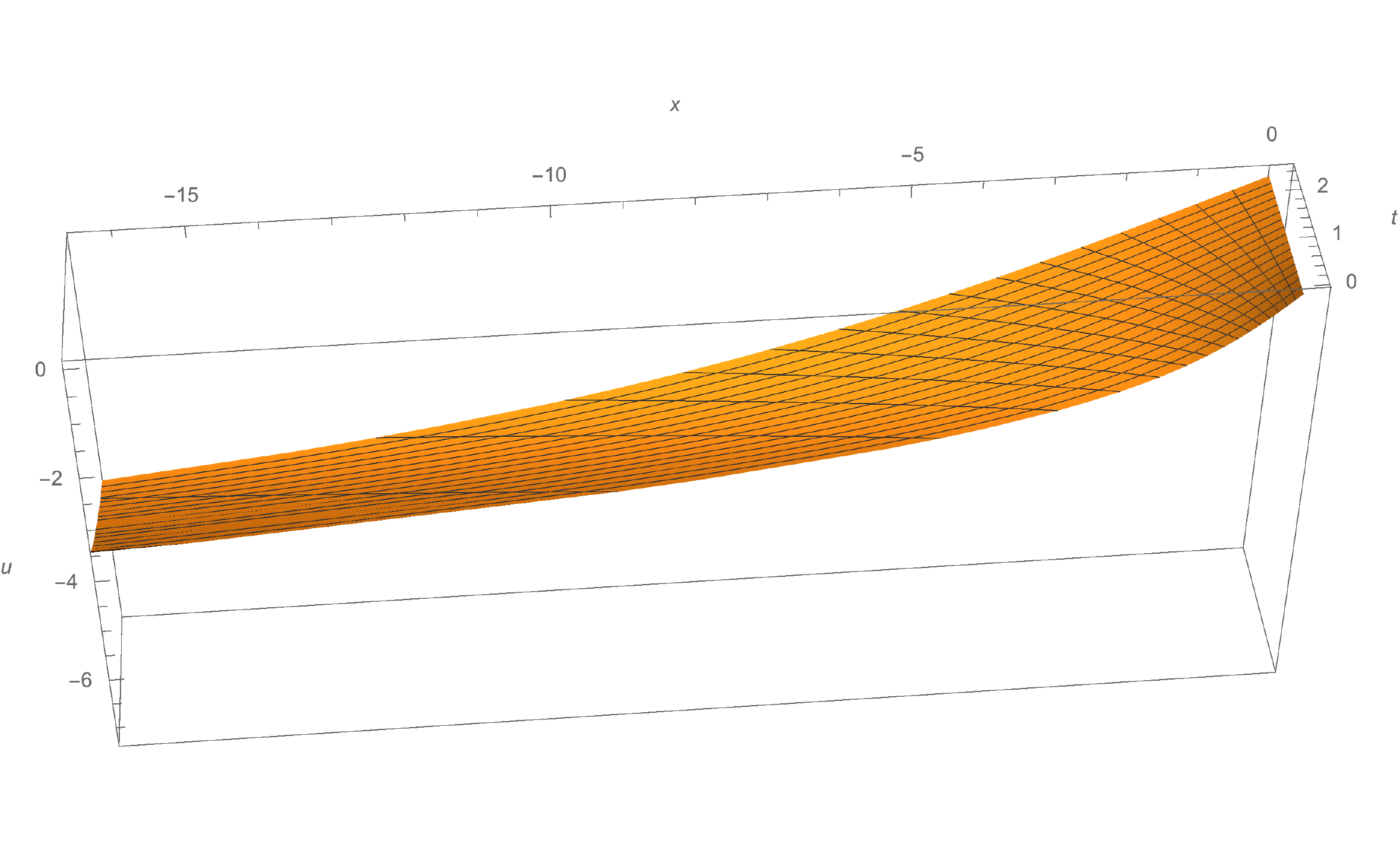}
\caption{The surface $\{(\chi,t,\U)\mid t\in[0,2.5], \, \eta\in(0,\pi)\}$ discussed in Example \ref{rem:smooth} (ii).}
\label{HS_1solitonSmooth}
\end{figure}
\end{example}

Let us next consider an example where the initial measure is a pure point measure.
\begin{example}\label{rem:soliton}
This simple singular example shows the interplay between measures $\mu$ and their pseudo inverses $\chi(x)$ better.\footnote{The solution in \eqref{eq:counter} comes from this example.} Consider the example $u_0=0$ and $\mu_0=\alpha\delta_0$, where 
$\delta_0$ is the Dirac delta function at the origin, and $\alpha\ge0$. Then $F_0\colon\Real\to [0,\alpha]$ reads
\begin{equation*}
F_0(x)=\begin{cases}
0, & \text{ if $x\leq 0$},\\
\alpha, & \text{ if  $x>0$}.
\end{cases}
\end{equation*}
The corresponding pseudo inverse $\chi_0\colon[0,\alpha]\to \Real$ is then given by
\begin{equation*}
\chi_0(\eta)=\begin{cases}
-\infty, & \text{ if $\eta=0$},\\
0, &  \text{ if $\eta\in (0,\alpha]$} .
\end{cases}
\end{equation*}
Thus\footnote{Note that in the smooth case both $\chi(F)$ and $F(\chi)$ are the identity function!}  
\begin{equation*}
\chi_0(F_0(x))=\begin{cases}
-\infty, & \text{ if $x\leq 0$},\\
0, & \text{ if  $x>0$},
\end{cases}\quad \text{ and } \quad F_0(\chi_0(\eta))= 0 \quad \text{ for all } \eta\in[0,\alpha].
\end{equation*}
In general one observes that jumps in $F_0(x)$ are mapped to intervals where $\chi_0(\eta)$ is constant and vice versa. This means in particular that intervals where $F_0(x)$ is constant shrink to single points. Moreover, if $F_0(x)$ is constant on some interval, then $u_0(x)$ is also constant on the same interval.

Next we compute the time evolution of both $\chi(t,\eta)$ and $\U(t,\eta)=u(t,\chi(t,\eta))$. Following the approach in \cite{BHR}, we obtain that the corresponding solution in Eulerian coordinates reads for $t$ positive
\begin{subequations}\label{HS:odeEX1_euler}
\begin{align}
u(t,x)&=\begin{cases} 
-\frac\alpha4 t, &  \text{ if $x\leq -\frac\alpha8 t^2$},\\
\frac{2x}{t},&  \text{ if $-\frac\alpha8 t^2\leq x\leq \frac\alpha8 t^2$},\\
\frac\alpha4 t, &  \text{ if $x\geq \frac\alpha8 t^2$},
\end{cases}\\
\mu(t,x)&= u_{x}^2(t,x) dx=\frac{4}{t^2}\indicator_{[-\alpha t^2/8,\alpha t^2/8]}(x)dx, \\
F(t,x)& =\begin{cases}
0, & \text{ if $x\leq -\frac\alpha8 t^2$},\\
\frac{4x}{t^2}+\frac\alpha2, &  \text{ if $-\frac\alpha8 t^2\leq x\leq \frac\alpha8 t^2$},\\
\alpha, &  \text{ if  $x\ge\frac\alpha8 t^2$}.
\end{cases}
\end{align}
\end{subequations}
Calculating the pseudo inverse $\chi(t,\eta)$ and $\U(t,\eta)=u(t,\chi(t,\eta))$ for each $t$ then yields
\begin{subequations}\label{HS:odeEX1}
\begin{align}
\chi(t,\eta)& = \frac{t^2}{4}\left(\eta-\frac\alpha2\right), \quad \eta\in(0,\alpha], \\
\U(t,\eta)& = \frac{t}{2}\left(\eta-\frac\alpha2\right), \quad \eta\in[0,\alpha],
\end{align}
\end{subequations}
and, in particular, that 
\begin{equation*}
\U_t(t,\eta) = \frac12 \left(\eta-\frac\alpha2\right), \quad \eta\in[0,\alpha].
\end{equation*}
Thus we still obtain the same ordinary differential equation \eqref{HS:ode} as in the smooth case! 
In addition, note that $\chi_t(0,\eta)=0$ for all $\eta\in(0,\alpha]$, and hence the important information is encoded in $\U_t(t,\eta)$.
\begin{figure}\centering
\includegraphics[width=10cm]{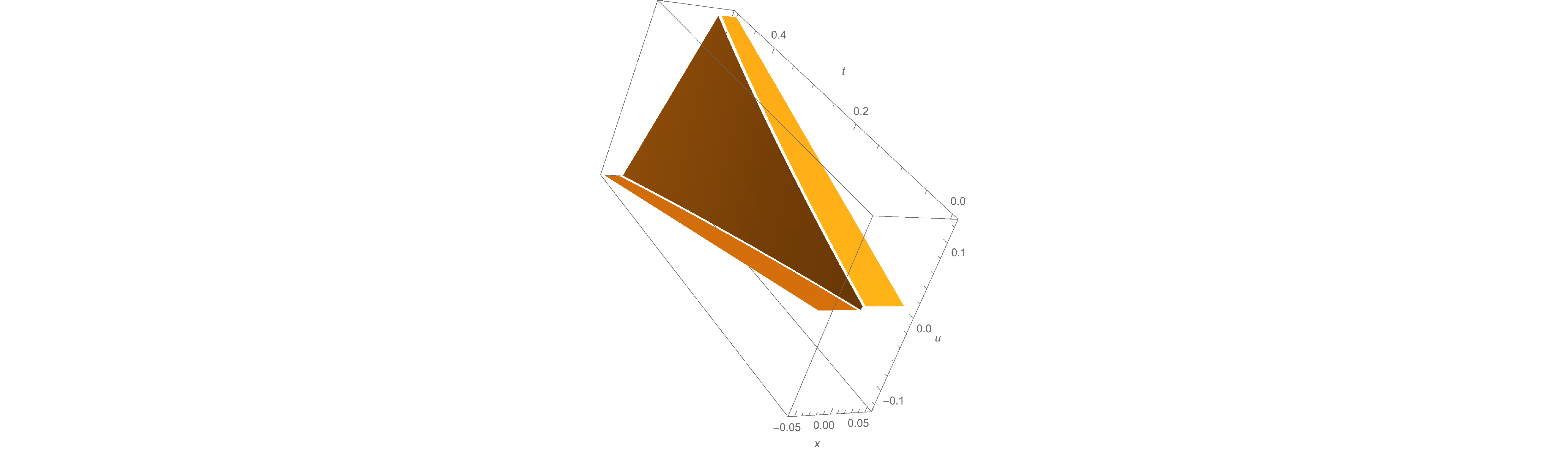}
\caption{The surface $\{(x,t,u)\mid t\in[0,.5], \, x\in[-.05,.05]\}$ discussed in Example \ref{rem:soliton}.}
\label{HS_solitondelta}
\end{figure}

We can of course also solve  $\chi_t=\U$ and $\U_t=\frac{\eta}{2}-\frac{\alpha}{4}$ directly with initial data $\chi_0=\U_0=0$, which again yields 
\eqref{HS:odeEX1}.  To return to the Eulerian variables $u$ and $\mu$ we have in the smooth region that
\begin{equation*}
u(t,x)=\U(t,\eta), \quad x=\chi(t,\eta), \quad \eta\in (0,\alpha], 
\end{equation*}
and we need to extend $\U$ and $\chi$ to all of  $\Real$ by continuity: 
\begin{align*}
\chi(t,\eta)&=\begin{cases} 
-\frac{\alpha }8t^2+\eta, &  \text{ if } \eta\leq 0,\\
\frac{t^2}{4}\left(\eta-\frac\alpha2\right),& \text{ if } \eta\in[0,\alpha], \\
\frac{\alpha}8t^2+\eta-\alpha, &  \text{ if } \eta\geq \alpha,
\end{cases} \quad  t\ge0, \\
\U(t,\eta)&=\begin{cases} 
-\frac{\alpha}4t=\U(t,0+), &  \text{ if } \eta\leq 0,\\
\frac{t}{2}\left(\eta-\frac\alpha2\right),& \text{ if } \eta\in[0,\alpha], \\
\frac{\alpha}4t=\U(t,\alpha-), &  \text{ if } \eta\geq \alpha,
\end{cases} \quad  t\geq0.
\end{align*}
Returning to the Eulerian variables we recover \eqref{HS:odeEX1_euler}. We can also depict the full solution in the $(x,t)$ plane in the new variables:  The full solution reads
\begin{equation*}
\{(\chi(t,\eta),t, \U(t,\eta))\in\Real^3 \mid t\in (0,\infty), \quad \eta\in \Real\}.
\end{equation*}
See Figure \ref{HS_solitondelta}.
\end{example}

The next example shows the difficulties that one has to face in the general case where the solution encounters a break down in the sense of steep gradients. 
 \begin{example}\label{rem:smoothA} 
Let
\begin{equation*}
u_0(x)=-x\indicator_{[0,1]}(x)- \indicator_{[1,\infty)}(x), \quad \mu_0(x)=u_{0,x}^2(x)dx=\indicator_{[0,1]}(x)dx.
\end{equation*}
Next we find
\begin{align*}
F_0(x)&= x\indicator_{[0,1]}(x)+  \indicator_{[1,\infty)}(x), \\
\chi_0(\eta)&=\begin{cases}
-\infty, & \text{ if $\eta= 0$},\\
\eta, & \text{ if  $\eta\in(0,1]$},
\end{cases} \\
\U_0(\eta)&  =-\eta, \quad \eta\in[0,1].
\end{align*}
\textit{Assuming} \eqref{HS:ode} holds also in this case,  we find
\begin{align*}
\chi(t,\eta)&=\frac{t^2}{4}(\eta-\frac12)-t\eta+\eta, \quad \eta\in(0,1],\\
\U(t,\eta)&  =\frac{t}{2}(\eta-\frac12)-\eta, \quad \eta\in[0,1].
\end{align*}
We extend the functions by continuity
\begin{align*}
\chi(t,\eta)&=\begin{cases} -\frac{t^2}8+\eta, &  \text{ if $\eta\leq 0$},\\
\frac{t^2}{4}(\eta-\frac12)-t\eta+\eta,& \text{ if $\eta\in[0,1]$}, \\
\frac{t^2}8-t+\eta, &  \text{ if $\eta\geq 1$},
\end{cases} \\
\U(t,\eta)&=\begin{cases} -\frac{t}4, &  \text{ if $\eta\leq 0$},\\
\frac{t}{2}(\eta-\frac12)-\eta,& \text{ if $\eta\in[0,1]$}, \\
\frac{t}4-1, &  \text{ if $\eta\geq 1$},
\end{cases}.
\end{align*}
which gives a well-defined global solution given by $\{(\chi(t,\eta),t,\U(t,\eta))\mid t\ge0, \, \eta\in\Real\}$.  However, as we return to Eulerian variables, the time-development is more dramatic. Solving the equation $x=\chi(t,\eta)$ for $\eta\in[0,1]$ yields
\begin{equation*}
\eta=\frac{4x+t^2/2}{(t-2)^2}\in[0,1],
\end{equation*}
which leads to
the solution 
\begin{equation*}
u(t,x)= \U\left(t,\frac{4x+t^2/2}{(t-2)^2}\right)= \frac{2x+t/2}{t-2}
\end{equation*}
whenever
\begin{equation*}
-\frac{t^2}{8}<x<\frac14\left((t-2)^2-\frac{t^2}{2}\right).
\end{equation*}
For $t\to2-$, we have that $u_x\to -\infty$ at $x=-1/2$.  
The solution on the full line reads  
\begin{equation*}
u(t,x)=\begin{cases} 
-\frac14 t, &  \text{ if $x\leq -\frac18 t^2$},\\
\frac{2x+t/2}{t-2},&  \text{ if $-\frac18 t^2\leq x\leq \frac14( (t-2)^2-\frac{t^2}2)$},\\
\frac14 t-1, &  \text{ if $x \ge \frac14( (t-2)^2-\frac{t^2}2)$}.
\end{cases}
\end{equation*}
The solution is illustrated in Figure \ref{HS_1break}.

\begin{figure}\centering
\includegraphics[width=10cm]{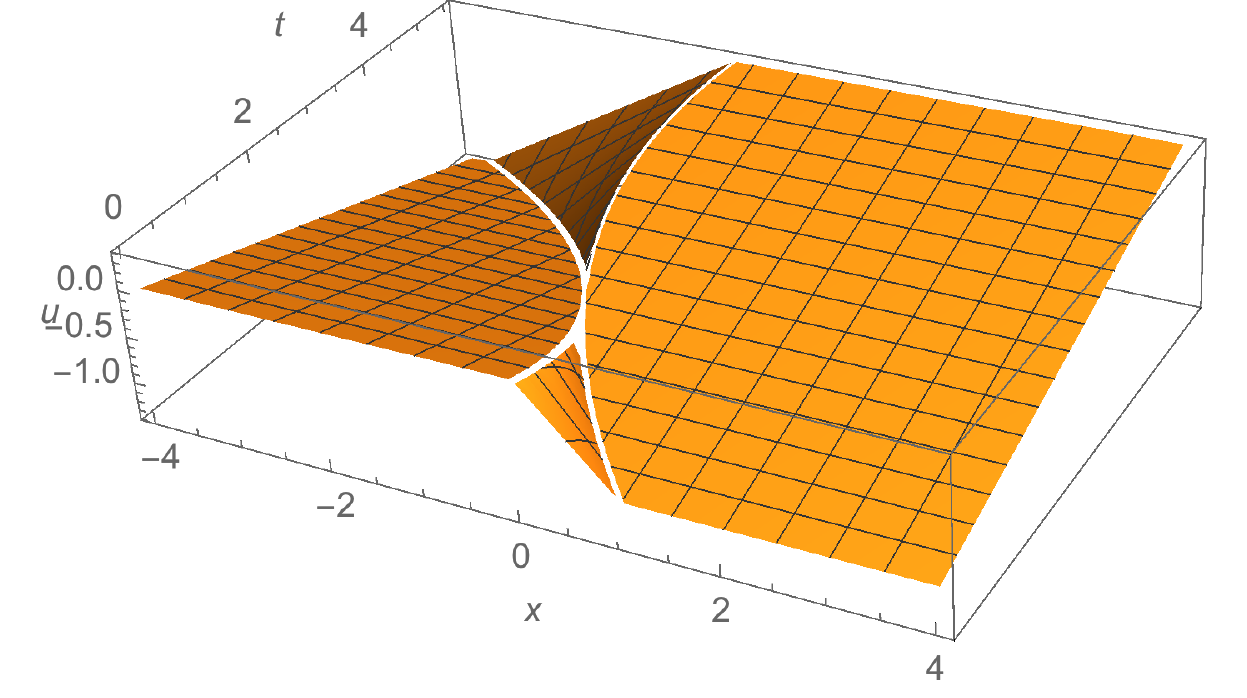}\\
\includegraphics[width=2.5cm]{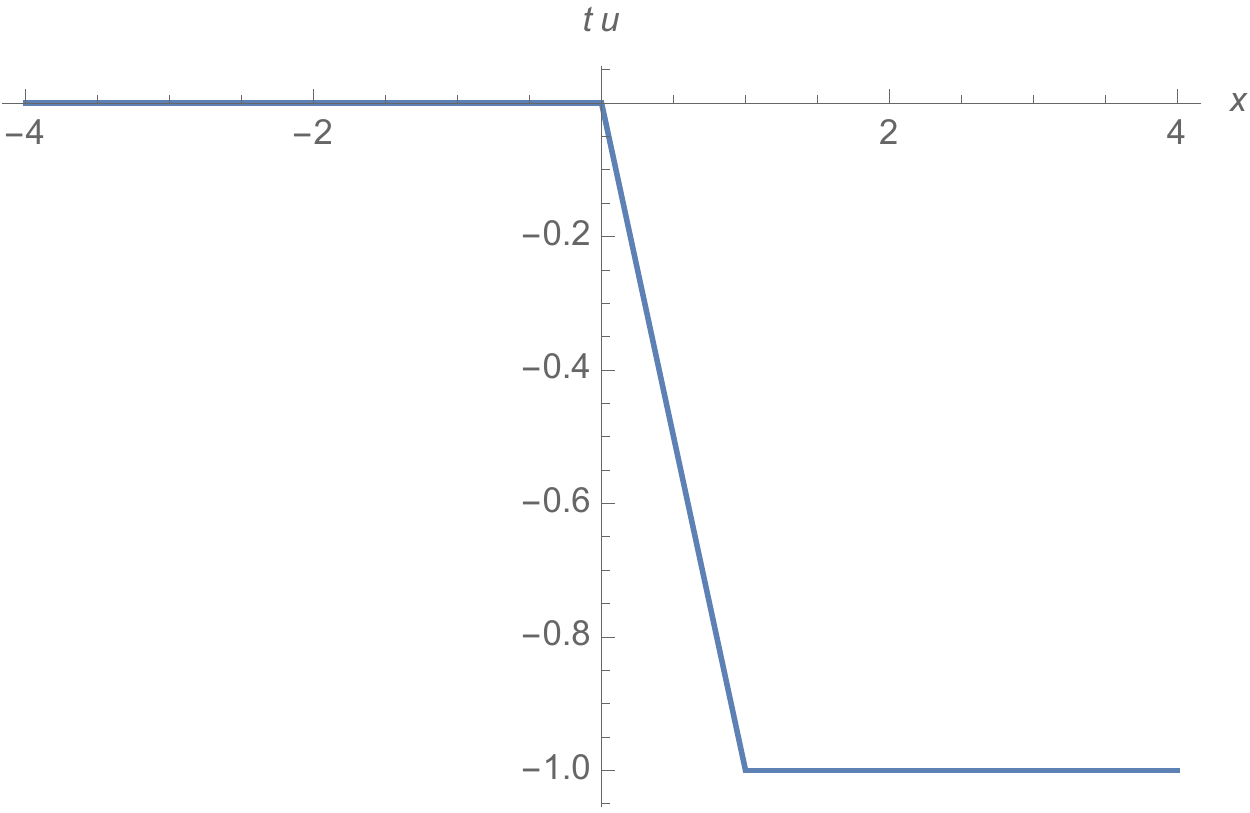}
\includegraphics[width=2.5cm]{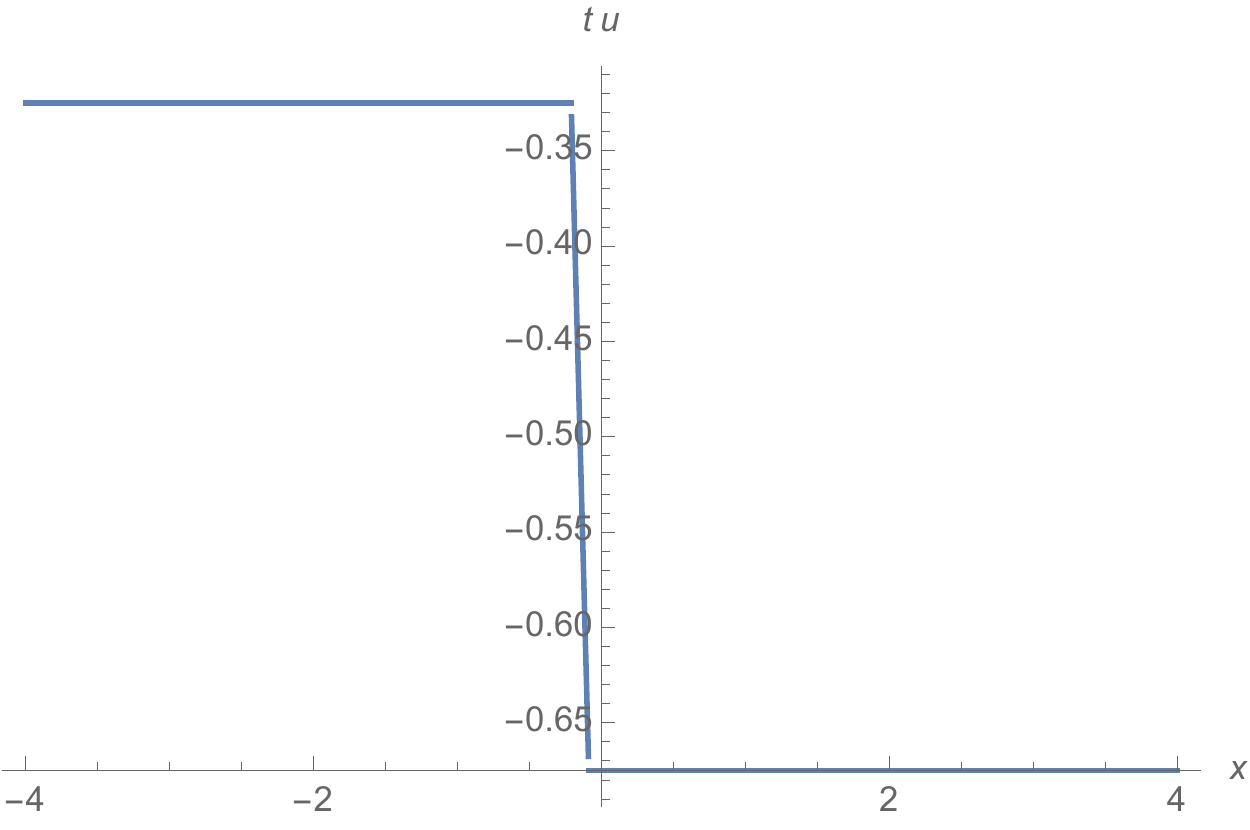}
\includegraphics[width=2.5cm]{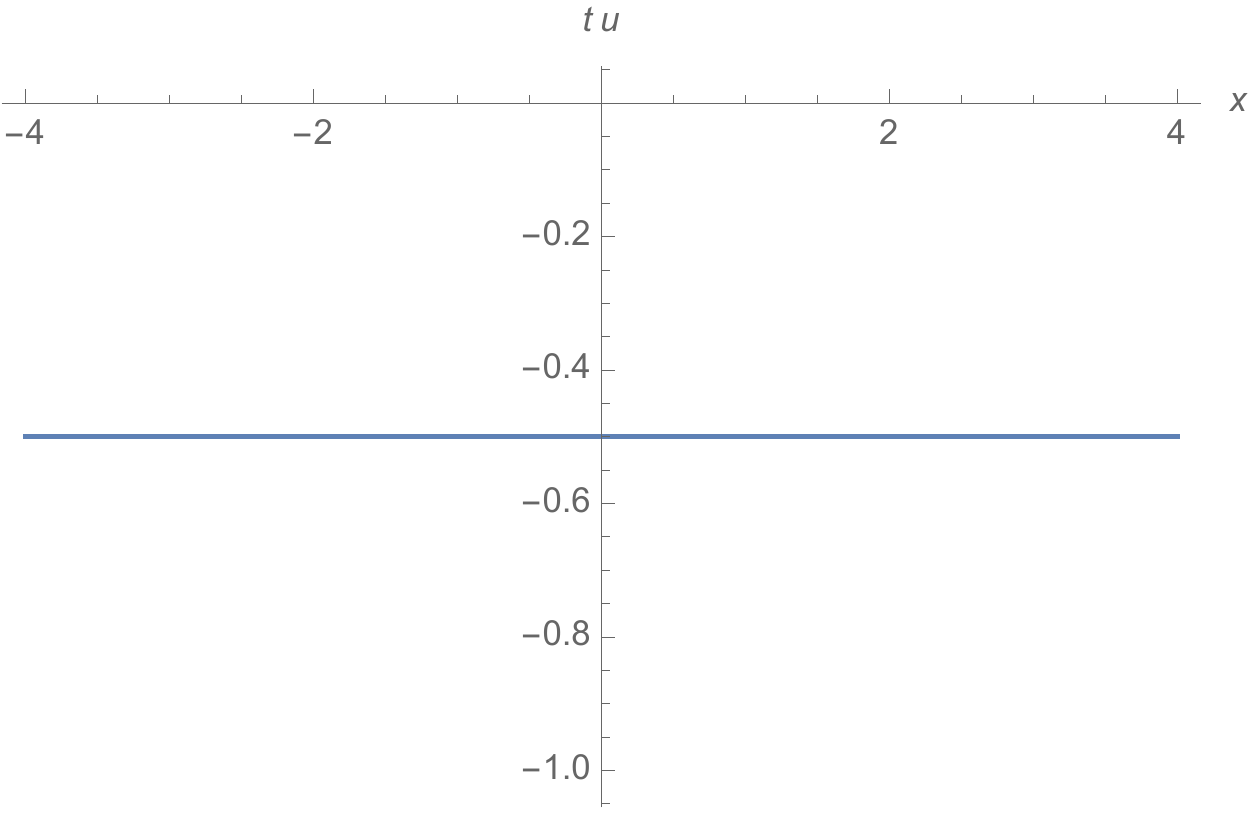}
\includegraphics[width=2.5cm]{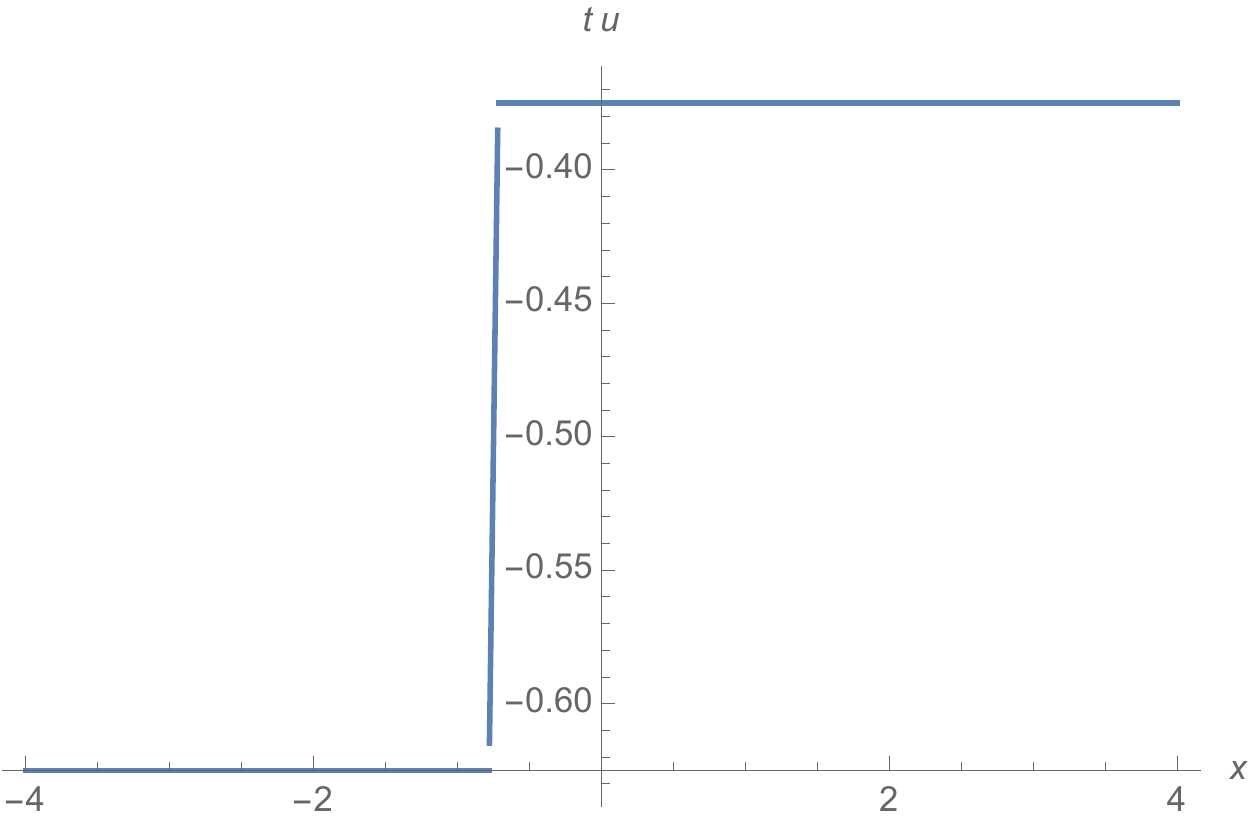}
\caption{The solution discussed in Example \ref{rem:smoothA}.}
\label{HS_1break}
\end{figure}
\end{example}

The above examples already hint that the interplay between Eulerian and Lagrangian coordinates is going to play a major role in our further considerations. 
We assume a smooth solution of
\begin{subequations} 
\begin{align}
  u_t+uu_x&=\frac14\left(\int_{-\infty}^xu_x^2(y)\,dy-\int_{x}^\infty u_x^2(y)\,dy\right), \label{eq:hs}\\
  (u_x^2)_t+(uu_x^2)_x&=0. \label{eq:transport}
\end{align}
\end{subequations}

Next, we rewrite the equation in Lagrangian coordinates. 
Introduce the characteristics
\begin{equation*}
  y_t(t,\xi)=u(t,y(t,\xi)).
\end{equation*}
The  Lagrangian velocity  $U$ reads
\begin{equation*}
  U(t,\xi)=u(t,y(t,\xi)).
\end{equation*}
Furthermore, we define the Lagrangian cumulative energy by
\begin{equation*}
  H(t,\xi)=\int_{-\infty}^{y(t,\xi)}u_x^2(t,x)\,dx.
\end{equation*}
From \eqref{eq:hs}, we get that
\begin{equation*}
  U_t=u_t\circ y+y_tu_x\circ y=\frac14\left(\int_{-\infty}^yu_x^2\,dx-\int_{y}^\infty u_x^2\,dx\right)=\frac12H-\frac14 C
\end{equation*}
where $C=H(t,\infty)$ is time independent,
and
\begin{align*}
  H_t&=\int_{-\infty}^{y(t,\xi)}(u_x^2(t,x))_t\,dx+y_tu_x^2(t,y)=\int_{-\infty}^{y(t,\xi)}((u_x^2)_t+(uu_x^2)_x)(t,x)\,dx=0
\end{align*}
by \eqref{eq:transport}. In this formal
computation, we require that $u$ and $u_x$ are
smooth and decay rapidly at infinity. Hence, the
HS equation formally is equivalent to
the following system of ordinary differential
equations:
\begin{subequations}
  \label{eq:sys}
  \begin{align}
    \label{eq:sys1}
    y_t&=U,\\
    \label{eq:sys2}
    U_t&=\frac12H-\frac14C,\\
    \label{eq:sys3}
    H_t&=0.
  \end{align}
\end{subequations}
Global existence of
solutions to \eqref{eq:sys} follows from the
linear nature of the system. There is no exchange
of energy across the characteristics and the
system \eqref{eq:sys} can be solved explicitly. This is in contrast to the Camassa--Holm equation where energy is exchanged across characteristics. We
have
\begin{subequations}
  \begin{align*}
    y(t,\xi)&=\left( \frac14H(0,\xi)-\frac18C\right) t^2+U(0,\xi)t+y(0,\xi),\\
    U(t,\xi)&=\left( \frac12H(0,\xi)-\frac14C\right) t+U(0,\xi),\\
    H(t,\xi)&=H(0,\xi).
  \end{align*}
\end{subequations}

We next focus on the general case without assuming regularity of the solution. It turns out that in addition to the variable $u$ we will need a measure $\mu$ that in smooth regions coincides with the energy density $u_x^2 dx$. At wave breaking, the energy at the point where the wave breaking takes place, is transformed into a point measure. It is this dynamics that is encoded in the measure $\mu$ that allows us to treat general initial data. An important complication stems from the fact that the original solution in two variables $(u,\mu)$ is transformed into Lagrangian coordinates with three variables $(y,U,H)$. This is a well-known consequence of the fact that one can parametrize a particle path in several different ways, corresponding to the same motion. This poses technical complications when we want to measure the distance between two distinct solutions in Lagrangian coordinates that correspond to the same Eulerian solution, and we denote this as \textit{relabeling} of the solution. 

We will employ the notation and the results from  \cite{BHR} and \cite{anders}.  Define the Banach spaces
\begin{align*}
  E_1&=\{f\in L^\infty(\Real) \mid f'\in L^2(\Real), \, \lim_{\xi\to-\infty}f(\xi)=0\}, \\
   E_2&=\{f\in L^\infty(\Real) \mid f'\in L^2(\Real)\},
\end{align*}
with  norms
\begin{equation*}
  \norm{f}_{E_j}=\norm{f}_{L^\infty}+\norm{f'}_{L^2}, \quad f\in E_j,\, j=1,2.
\end{equation*}
Let 
\begin{equation*}
  B=E_2\times E_2\times E_1,
\end{equation*}
with norm
\begin{equation*}
  \norm{(f_1,f_2,f_3)}_{B}=\norm{f_1}_{E_2}+\norm{f_2}_{E_2}+ \norm{f_3}_{E_1}, \quad (f_1,f_2,f_3)\in B.
\end{equation*}

We are given some initial data 
$(u_0,\mu_0)\in \D$, where the set $\D$ is defined as follows.
\begin{definition}
  The set $\D$ consists of all pairs $(u,\mu)$
  such that\\[1mm]
  (i) $u\in E_2$;\\[1mm]
(ii) $\mu$ is a nonnegative and finite Radon measure such that
$\muac=u_x^2dx$ where $\muac$ denotes the absolute continuous
  part of $\mu$ with respect to the Lebesgue
  measure.
\end{definition}

The Lagrangian variables are given by $(\zeta,U,H)$ (with $\zeta=y-\id$), and the appropriate space is defined as follows.
\begin{definition}
  \label{def:F}
  The set $\F$ consists of the elements
  $(\zeta,U,H)\in B=E_2\times E_2\times E_1$ such
  that  \\[1mm]
  (i) $(\zeta,U,H)\in(W^{1,\infty}(\Real))^3$, where $\zeta(\xi)=y(\xi)-\xi$; \\[1mm]
  (ii) $y_\xi\geq0$, $H_\xi\geq0$ and $y_\xi+H_\xi\geq c$, almost everywhere, where  $c$ is a strictly positive constant;\\[1mm]
  (iii) $y_\xi H_\xi=U_\xi^2$ almost everywhere.
\end{definition}
The key subspace $\F_0\subset\F$ is defined by
\begin{equation*}
  \F_0=\{X=(y,U,H)\in\F\mid y+H=\id\}.
\end{equation*}

We need to clarify the relation between the Eulerian variables $(u,\mu)$ and the Lagrangian variables $(\zeta,U,H)$.
The transformation 
\begin{equation*}
L\colon \D\to \F_0, \quad X=L(u,\mu)
\end{equation*}
is defined as follows.  
\begin{definition}
  \label{def:Ldef}
  For any $(u,\mu)$ in $\D$, let
  \begin{subequations}
    \label{eq:Ldef}
    \begin{align}
      \label{eq:Ldef1}
      y(\xi)&=\sup\left\{y\mid \mu((-\infty,y))+y<\xi\right\},\\
      \label{eq:Ldef2}
      H(\xi)&=\xi-y(\xi),\\
      \label{eq:Ldef3}
      U(\xi)&=u\circ{y(\xi)}.
    \end{align}
  \end{subequations}
  Then $X=(\zeta,U,H)\in\F_0$ and we denote by
  $L\colon \D\to\F_0$ the map which to any
  $(u,\mu)\in\D$ associates $(\zeta,U,H)\in\F_0$ as
  given by \eqref{eq:Ldef}.
\end{definition}
From the Lagrangian variables we can return to Eulerian variables using the following transformation.
\begin{definition}
  \label{def:Mdef}
  Given any element $X$ in $\F$. Then, the pair
  $(u,\mu)$ defined as follows
  \begin{subequations}
    \label{eq:umudef}
    \begin{align}
      \label{eq:umudef1}
      &u(x)=U(\xi)\text{ for any }\xi\text{ such that  }  x=y(\xi),\\
      \label{eq:umudef2}
      &\mu=y_\#(H_\xi\,d\xi)
    \end{align}
  \end{subequations}
  belongs to $\D$. Here, the push-forward  of a measure
    $\nu$ by a measurable function $f$ is the measure $f_\#\nu$ defined by
    $f_\#\nu(B)=\nu(f\inv(B))$ for all Borel sets $B$. We denote by
  $M\colon\F\rightarrow\D$ the map which to any
  $X$ in $\F$ associates $(u,\mu)$ as given by
  \eqref{eq:umudef}.
\end{definition}
The key properties of these transformations are (cf. \cite[Prop. 2.11]{BHR})
\begin{equation}\label{eq:BHR3}
L\circ M|_{\F_0}=\id_{\F_0}, \quad M\circ L=\id_\D. 
\end{equation}

The formalism up to this point has been stationary, transforming back and forth between Eulerian and  Lagrangian variables. Next we can take into consideration the time-evolution of the solution of the HS equation.

The evolution of the HS equation in Lagrangian variables is determined by the system  (cf. \eqref{eq:sys})
\begin{equation}\label{eq:BHR4}
S_t\colon \F\to \F, \quad X(t)=S_t(X_0), \quad X_t=S(X), \quad X|_{t=0}=X_0.
\end{equation}
of ordinary differential equations. Here
\begin{equation*}
S(X)= \begin{pmatrix}
U \\ \frac12H-\frac14C \\ 0
\end{pmatrix}.
\end{equation*}
Next, we address the question about relabeling. We need to identify Lagrangian solutions that correspond to one and the same solution in Eulerian coordinates.  Let 
 $\Gr$ be the subgroup of the
group of homeomorphisms on $\Real$
such that
\begin{subequations}
\begin{align*}
  f-\id\text{ and }f^{-1}-\id&\text{ both belong to }W^{1,\infty}(\Real),\\
  f_\xi-1 &\text{ belongs to } L^2(\Real).
\end{align*}
\end{subequations}

By default the HS equation is invariant under relabeling, which is given by  equivalence classes 
\begin{align*}
 [X]&=\{\tilde X\in \F\mid \text{there exists $g\in G$ such that  $X=\tilde X\circ g$}\},\\
 \F/G&=\{[X]\mid X\in \F\}.
\end{align*}
The key subspace of $\F$ is denoted $\F_0$ and is defined by
\begin{equation*}
  \F_0=\{X=(y,U,H)\in\F\mid y+H=\id\}.
\end{equation*}
The map into the critical space $\F_0$ is taken care of by (cf. \cite[Def. 2.9]{BHR})
\begin{equation*}
\Pi\colon \F\to\F_0, \quad \Pi(X)=X\circ(y+H)^{-1},
\end{equation*}
with the property that $\Pi(\F)=\F_0$.  We note that the map $X\mapsto [X]$ from $\F_0$ to $\F/G$ is a bijection.   
Then we have that (cf. \cite[Prop. 2.12]{BHR})
\begin{equation*}
\Pi\circ S_t\circ \Pi= \Pi\circ S_t, 
\end{equation*}
and hence we can define the semigroup
\begin{equation}\label{eq:BHR7}
\tilde S_t=\Pi\circ S_t\colon \F_0\to\F_0.
\end{equation}

We can now provide the solution of the HS equation. 
Consider initial data $(u_0,\mu_0)\in \D$, and define $\bar X_0=(\bar y_0, \bar U_0,\bar H_0)=L(u_0,\mu_0)\in\F_0$ given by  
\begin{subequations} 
\begin{align*}
\bar y_0(\xi)& =\sup\{x\mid x+F(0,x)<\xi\},\\
\bar U_0(\xi)& = u_0(\bar y_0(\xi)), \\
\bar H_0(\xi)&=\xi-\bar y_0(\xi),
\end{align*}
\end{subequations}
with $F(0,x)=\mu_0((-\infty,x))$.   
Next we want to determine the solution $(u(t),\mu(t))\in \D$ (we suppress the dependence in the notation on the spatial variable $x$ when convenient) for arbitrary time $t$. 

Define 
\begin{equation}\label{eq:BHR8}
\bar X(t)=S_t \bar X_0\in \F, \quad X(t)=\tilde S_t \bar X_0\in \F_0.
\end{equation}
The advantage of $\bar X(t)$ is that it obeys the differential equation \eqref{eq:BHR4}, while $X(t)$ keeps the relation $y+H=\id$ for all times.
From \eqref{eq:BHR7} we have that 
\begin{equation*}
 X(t)=\Pi(\bar X(t)).
\end{equation*}
We know that $\bar X(t,\xi)=(\bar y(t,\xi), \bar U(t,\xi), \bar H(t,\xi))\in \F$  is the solution of 
\begin{subequations}\label{eq:tildenull}
\begin{align}
\bar y_t(t,\xi)&=\bar U(t,\xi),\\
\bar U_t(t,\xi)& = \frac12 \bar H(\xi)-\frac14 C, \\
\bar H_t(t,\xi)& =0,
\end{align}
\end{subequations}
where $C=\mu_0(\Real)$ and $\bar X(0)=\bar X_0$.  
Straightforward integration yields
\begin{subequations}
\begin{align*}
\bar y(t,\xi)&=\frac14(\bar H_0(\xi)-\frac12 C)t^2+\bar U_0(\xi)t+\bar y_0(\xi),\\
\bar U(t,\xi)& = \frac12 (\bar H_0(\xi)-\frac12 C)t+\bar U_0(\xi),  \\
\bar H(t,\xi)& =\bar H_0(\xi)=\xi-\bar y_0(\xi).
\end{align*}
\end{subequations}
The solution $(u(t),\mu(t))=M(\bar X(t))$ in Eulerian variables reads
\begin{subequations}
\begin{align*}
u(t,x)&= \bar U(t,\xi), \quad \bar y(t,\xi)=x, \\
\mu(t,x)&= \bar y_\#(\bar H_\xi(t,\xi)d\xi), 
\end{align*}
\end{subequations} 
with $F(t,x)=\mu(t, (-\infty,x))=\int_{\bar y(t,\xi)<x} (1- \bar y_{0,\xi}(\xi))d\xi$.

However, for $X(t,\xi)=(y(t,\xi), U(t,\xi), H(t,\xi))\in\F_0$, which satisfies
\begin{equation*}
 X(t)=\bar X(t)\circ (\bar y+\bar H)^{-1}\in\F_0,
\end{equation*}
we see, using \eqref{eq:BHR3}, that
\begin{align*}
 y(t,\xi)&=\sup\{x\mid x+F(t,x)< \xi\} \notag\\
&=\sup\{x\mid x+F(t,x)< y(t,\xi)+H(t,\xi)\}, 
\end{align*}
where we in the second equality use that $X(t)\in\F_0$. 
Note that we still have $(u(t),\mu(t))=M(\bar X(t))= M(X(t))$, and thus 
\begin{subequations}
\begin{align*}
u(t,x)&= U(t,\xi), \quad y(t,\xi)=x, \\
\mu(t,x)&= y_\#(H_\xi(t,\xi)d\xi), 
\end{align*}
\end{subequations} 
with $F(t,x)=\mu(t, (-\infty,x))=\int_{y(t,\xi)<x} H_\xi(t,\xi)d\xi$.
Since $\bar X(t)=X(t)\circ (\bar y+\bar H)$, we find that
\begin{equation}
\bar y(t,\xi)=y(t,\bar y(t,\xi)+\bar H(t,\xi))
=\sup\{x\mid x+F(t,x)< \bar y(t,\xi)+\bar H(t,\xi)\}.  \label{eq:BHR12}
\end{equation}
This is the only place in this construction where we use the quantity $X(t)$.

\medskip
Define now
\begin{equation*}
\chi(t,\eta)=\sup\{x\mid\mu(t,(-\infty,x))<\eta\}=\sup\{x\mid F(t,x)<\eta\}.
\end{equation*}
We claim that
\begin{equation*}
\chi(t,\eta) =\bar y(t, l(t,\eta)),
\end{equation*}
where we have introduced $l(t,\dott)\colon[0,C]\to\Real$ by 
\begin{equation}\label{def:l}
l(t,\eta)=\sup\{\xi\mid \bar H(t,\xi)<\eta\}.
\end{equation}
Note that since $ \bar H_t=0$, we have that
\begin{equation*}
l(t,\eta)=l(0,\eta) \quad \text{ and }\quad  l_t(t,\eta)=0.
\end{equation*}
Recall that for each time $t$ we have (cf.~\eqref{eq:BHR12})  
\begin{equation*}
\bar y(t, \xi)=\sup\{x\mid x+F(t,x)< \bar y(t,\xi)+\bar H(t,\xi)\},
\end{equation*}
which implies that  
\begin{equation*}
\bar y(t,\xi)+F(t,\bar y(t,\xi))\leq \bar y(t,\xi)+\bar H(t,\xi)\leq \bar y(t,\xi)+F(t,\bar y(t,\xi)+).
\end{equation*}
Subtracting $\bar y(t,\xi)$ in the above inequality, we end up with 
\begin{equation*}
F(t, \bar y(t,\xi))\leq \bar H(t,\xi)\leq F(t,\bar y(t,\xi)+) \quad \text{ for all } \xi\in\Real.
\end{equation*}
Comparing the last equation and \eqref{def:l}, we have 
\begin{equation*}
F(t, \bar y(t, l(t,\eta)))\leq \bar H(t, l(t,\eta))=\eta\leq F(t, \bar y(t,l(t,\eta))+).  
\end{equation*}
Since $\bar y(t,\dott)$ is surjective and non-decreasing, we end up with 
\begin{equation*}
\chi(t,\eta)=\sup\{x\mid F(t,x)<\eta\}= \bar y(t,l(t,\eta)).
\end{equation*}

Introduce the new function
\begin{equation*}
\U(t,\eta) = \bar U(t, l(t,\eta)).
\end{equation*}
We are now ready to derive the system of ordinary differential equations for $\chi(t,\eta)$ and $\U(t,\eta)$. Therefore recall that 
\begin{equation*}
\bar H(t,l(t,\eta))=\bar H(0,l(t,\eta))=\bar H(0,l(0,\eta))=\eta \quad \text{ for all } \eta\in [0,C],
\end{equation*}
since $\bar H(0,\xi)$ is continuous. Direct calculations yield
\begin{align*}
\chi_t(t,\eta)&=\frac{d}{dt}\bar y(t, l(0,\eta))=\bar y_t(t,l(0,\eta))=\bar U(t, l(0,\eta))=\bar U(t, l(t,\eta))\\
&=\U(t,\eta),\\
\U_t(t,\eta)&= \frac{d}{dt}\bar U(t,l(0,\eta))= \bar U_t(t, l(0,\eta))= \frac12 \bar H(t,l(0,\eta))-\frac14 C\\
&= \frac 12 \eta-\frac14 C.
\end{align*}

Thus we established rigorously the linear system 
\begin{subequations}\label{sys:ode:metric}
\begin{align}
\chi_t(t,\eta)&= \U(t,\eta),\\
\U_t(t,\eta)&= \frac12 \eta-\frac14 C
\end{align}
\end{subequations}
of ordinary differential equations, with solution
\begin{subequations}
\begin{align*}
\chi(t,\eta)&=\left(\frac14\eta-\frac18 C\right)t^2+\U(0,\eta)t+\chi(0,\eta),\\
\U(t,\eta)&=\left(\frac12 \eta-\frac14 C\right)t+\U(0,\eta).
\end{align*}
\end{subequations}

\begin{example}
Recall Example \ref{rem:soliton}.  Let us compute the corresponding quantities in the case with $u_0=0$ and $\mu_0=\delta_0$.
Here we find  that
\begin{align*}
\bar y_0(\xi)&=\begin{cases}
\xi, & \text{for $\xi\le 0$}, \\
0, & \text{for $\xi\in(0,1)$}, \\
\xi-1, & \text{for $\xi\ge1$}, 
\end{cases} \\[2mm]
\bar U_0(\xi)&=0, \\[2mm]
\bar H_0(\xi)&=\begin{cases}
0, & \text{for $\xi\le 0$}, \\
\xi, & \text{for $\xi\in(0,1)$}, \\
1, & \text{for $\xi\ge1$}.
\end{cases} 
\end{align*}
The solution of $\bar X=S_t\bar X_0$  (cf.~\eqref{eq:BHR8}) reads
\begin{align*}
\bar y(t,\xi)&=\begin{cases}
\xi-\frac{t^2}{8}, & \text{for $\xi\le 0$}, \\
\frac{t^2}{4}\xi-\frac{t^2}{8}, & \text{for $\xi\in(0,1)$}, \\
\frac{t^2}{8}+\xi-1, & \text{for $\xi\ge1$}, 
\end{cases} \\[2mm]
\bar U(t,\xi)&=\begin{cases}
-\frac{t}{4}, & \text{for $\xi\le 0$}, \\
\frac{t}{2}\xi-\frac{t}{4}, & \text{for $\xi\in(0,1)$}, \\
\frac{t}{4}, & \text{for $\xi\ge1$}, 
\end{cases} \\[2mm]
\bar H(t,\xi)&=\begin{cases}
0, & \text{for $\xi\le 0$}, \\
\xi, & \text{for $\xi\in(0,1)$}, \\
1, & \text{for $\xi\ge1$}.
\end{cases} 
\end{align*}
If we compute the corresponding quantities $X(t)=\bar X(t)\circ(\bar y+\bar H)^{-1}$, we find
\begin{align*}
y(t,\xi)&=\begin{cases}
\xi, & \text{for $\xi\le -\frac{t^2}{8}$}, \\
\frac{t^2}{t^2+4}\xi-\frac{t^2}{2t^2+8}, & \text{for $\xi\in(-\frac{t^2}{8},\frac{t^2}{8}+1)$}, \\
\xi-1, & \text{for $\xi\ge\frac{t^2}{8}+1$}, 
\end{cases} \\[2mm]
U(t,\xi)&=\begin{cases}
-\frac{t}{4}, & \text{for $\xi\le -\frac{t^2}{8}$}, \\
\frac{2t}{t^2+4}(\xi+\frac{t^2}{8})-\frac{t}4, & \text{for $\xi\in(-\frac{t^2}{8},\frac{t^2}{8}+1)$}, \\
\frac{t}{4}, & \text{for $\xi\ge\frac{t^2}{8}+1$}, 
\end{cases} \\[2mm]
H(t,\xi)&=\begin{cases}
0, & \text{for $\xi\le -\frac{t^2}{8}$}, \\
\frac{4}{t^2+4}(\xi+\frac{t^2}{8}), & \text{for $\xi\in(-\frac{t^2}{8},\frac{t^2}{8}+1)$}, \\
1, & \text{for $\xi\ge\frac{t^2}{8}+1$},
\end{cases} 
\end{align*}
with the property that $y+H=\id$.
Finally, we find
\begin{align*}
l(t,\eta)&=\begin{cases}
-\infty, & \text{for $\eta=0$}, \\
\eta, & \text{for $\eta\in(0,1]$}, \\
\end{cases} \\[2mm]
\chi(t,\eta)&=\bar y(t, l(t,\eta))=\frac{t^2}{4}\left(\eta-\frac12\right), \quad \eta\in(0,1],  \\
\U(t,\eta)&= \bar U(t, l(t,\eta))=\frac{t}{2}\left(\eta-\frac12\right), \quad \eta\in(0,1],
\end{align*}
as expected.
\end{example}

\begin{theorem}\label{thm:existence}
Let $u_0\in H^1(\Real)$ and $\mu_0$ be a nonnegative, finite Radon measure with $C=\mu_0(\Real)$. Let $(u(t),\mu(t))$ denote the conservative solution of the Hunter--Saxton equation.  Define
\begin{align*}
\chi_0(\eta)&=\sup\{x\mid \mu_0((-\infty,x))<\eta\}, \\
\U_0(\eta)&=u_0(\chi_0(\eta)).
\end{align*}
If $\lim_{\eta\to 0}\chi_0(\eta)$ and $\lim_{\eta\to C}\chi_0(\eta)$ are finite, we define
\begin{align*}
\chi(t,\eta)&=
\begin{cases}
-\frac{C}{8}t^2+t\,\U_0(0)+\chi_0(0)+\eta, &\text{if $\eta<0$}, \\
\frac{t^2}{4}(\eta-\frac{C}2)+t\,\U_0(\eta)+\chi_0(\eta), & \text{if $\eta\in(0,C]$}, \\  
\frac{C}{8}t^2+t\U_0(C)+\chi_0(C)+\eta-C, &\text{if $\eta>C$},
\end{cases} \\[2mm]
\U(t,\xi)&=\begin{cases}
-\frac{C}{4}t+\U_0(0), &\text{if $\eta<0$}, \\
\frac{t}{2}(\eta-\frac{C}2)+\U_0(\eta), & \text{if  $\eta\in(0,C]$}, \\
\frac{C}{4}t+\U_0(C), &\text{if $\eta\ge C$}.
\end{cases}
\end{align*}
Then we have
\begin{multline*}
\{(x,t,u(t,x))\in \Real^3\mid t\in[0,\infty),\, x\in\Real \} \\
   =\{(\chi(t,\eta),t,\U(t,\eta))\in \Real^3\mid t\in[0,\infty),\, \eta\in\Real \}.
\end{multline*}

If $\lim_{\eta\to 0}\chi_0(\eta)=-\lim_{\eta\to C}\chi_0(\eta)=-\infty$, we define
\begin{align*}
\chi(t,\eta)&=
\frac{t^2}{4}(\eta-\frac{C}2)+t\,\U_0(\eta)+\chi_0(\eta), \quad \text{if $\eta\in(0,C)$}, \\  
\U(t,\xi)&=
\frac{t}{2}(\eta-\frac{C}2)+\U_0(\eta), \quad \text{if  $\eta\in(0,C)$}.
\end{align*}
Then we have
\begin{multline*}
\{(x,t,u(t,x))\in \Real^3\mid t\in[0,\infty),\, x\in\Real \} \\
   =\{(\chi(t,\eta),t,\U(t,\eta))\in \Real^3\mid t\in[0,\infty),\, \eta\in(0,C) \}.
\end{multline*}
Similar results hold if one of $\lim_{\eta\to 0}\chi_0(\eta)$ and $\lim_{\eta\to C}\chi_0(\eta)$ is finite. 
\end{theorem}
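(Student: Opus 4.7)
The plan is to deduce Theorem~\ref{thm:existence} from the Lagrangian machinery already assembled in this section, so most of the computational work is done; what remains is to evaluate the explicit Lagrangian formulas at the correct Lagrangian parameter and to glue in the boundary extensions.

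First I would invoke the Lagrangian representation: set $\bar X_0=(\bar y_0,\bar U_0,\bar H_0)=L(u_0,\mu_0)\in\F_0$ and $\bar X(t)=S_t\bar X_0$, so that $(u(t),\mu(t))=M(\bar X(t))$ is the conservative solution by \eqref{eq:BHR3}--\eqref{eq:BHR8}. The derivation preceding the theorem has already established
\[
\chi(t,\eta)=\bar y(t,l(0,\eta)),\qquad \U(t,\eta)=\bar U(t,l(0,\eta)),
\]
with $l(0,\eta)=\sup\{\xi\mid \bar H_0(\xi)<\eta\}$, together with the explicit linear formulas for $\bar y(t,\xi)$ and $\bar U(t,\xi)$ coming from~\eqref{eq:tildenull}. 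For $\eta\in(0,C)$ I substitute $\xi=l(0,\eta)$ using three facts: $\bar H_0(l(0,\eta))=\eta$ (valid since $\bar H_0=\id-\bar y_0$ is continuous, as $\bar y_0$ is Lipschitz and nondecreasing); $\bar y_0(l(0,\eta))=\chi_0(\eta)$ (by matching the two-sided sandwich $F_0(\bar y_0(\xi))\le \bar H_0(\xi)\le F_0(\bar y_0(\xi)+)$ proved above with the analogous pseudo-inverse characterization of $\chi_0$); and $\bar U_0(l(0,\eta))=u_0(\chi_0(\eta))=\U_0(\eta)$ directly from Definition~\ref{def:Ldef}. This yields the central-regime formulas of the theorem.

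Next I would handle the boundary extensions in the finite-limit case, say $\chi_0(0+)=x_\ell\in\Real$. Then $\mu_0((-\infty,x_\ell))=0$ forces $u_{0,x}=0$ a.e. on $(-\infty,x_\ell)$; combined with $u_0\in H^1$ this makes $u_0\equiv \U_0(0)$ on $(-\infty,x_\ell]$. The corresponding Lagrangian data satisfy $\bar H_0(\xi)=0$, $\bar y_0(\xi)=\xi$, $\bar U_0(\xi)=\U_0(0)$ for $\xi\le l_-:=x_\ell$. Reparametrizing this left flat region by $\eta=\xi-l_-\le 0$ and substituting into the Lagrangian formulas produces exactly
\[
\bar y(t,l_-+\eta)=-\tfrac{C}{8}t^2+t\,\U_0(0)+\chi_0(0)+\eta,\qquad \bar U(t,l_-+\eta)=-\tfrac{C}{4}t+\U_0(0),
\]
which are the linear extensions stated for $\eta<0$. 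The right tail is symmetric via the reparametrization $\eta=C+(\xi-l_+)$ on the flat region $\xi\ge l_+$ where $\bar H_0=C$. The fully infinite case $\chi_0(0+)=-\infty$ and $\chi_0(C-)=+\infty$ needs no extension because the $\eta\in(0,C)$ formula already drives $\chi(t,\eta)\to\pm\infty$; the mixed cases combine the two arguments on the appropriate side.

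Finally I would verify the set equality. Continuity of the piecewise formulas at $\eta=0$ and $\eta=C$ is direct, and combined with monotonicity of $\chi(t,\cdot)$ and the asymptotics $\chi(t,\pm\infty)=\pm\infty$ this gives surjectivity onto $\Real$. Where $\chi(t,\cdot)$ is flat over an $\eta$-interval, $u(t,\cdot)$ is a single value by continuity of $u(t,\cdot)\in H^1$, and that value coincides with $\U(t,\eta)$ on the flat interval via $u\circ\bar y=\bar U$ from~\eqref{eq:umudef1}. The main obstacle, and the only step that genuinely needs care, is the bookkeeping: each of the four combinations of finite/infinite endpoints produces a different partition of $\Real$ in the $\xi$-variable into flat and evolving pieces, and one must check that the piecewise formulas glue continuously and together exhaust the full graph of $u(t,\cdot)$.
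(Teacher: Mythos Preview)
Your proposal is correct and follows essentially the same route as the paper: the paper's proof of Theorem~\ref{thm:existence} is the derivation immediately preceding the theorem statement, which establishes $\chi(t,\eta)=\bar y(t,l(0,\eta))$ and $\U(t,\eta)=\bar U(t,l(0,\eta))$ via the Lagrangian machinery of \cite{BHR} and then reads off the explicit formulas from \eqref{eq:tildenull}. You supply more explicit detail on the boundary extensions and the final set-equality verification, which the paper leaves largely to the worked examples, but the core argument is the same.
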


We can now introduce the new Lipschitz metric. Define
\begin{equation*}
d((u_1(t),\mu_1(t)) ,(u_2(t),\mu_2(t)))
= \norm{\U_1(t)-\U_2(t)}_{L^\infty([0,C])}+\norm{\chi_1(t)-\chi_2(t)}_{L^1([0,C])},
\end{equation*}
which implies that
\begin{align*}
d((u_1(t)&,\mu_1(t)) ,(u_2(t),\mu_2(t)))\\
& = \norm{\U_1(t)-\U_2(t)}_{L^\infty([0,C])}+\norm{\chi_1(t)-\chi_2(t)}_{L^1([0,C])}\\
& \leq (1+Ct)\norm{\U_1(0)-\U_2(0)}_{L^\infty([0,C])}+\norm{\chi_1(0)-\chi_2(0)}_{L^1([0,C])}\\
& \leq (1+Ct)d((u_1(0),\mu_1(0)),(u_2(0),\mu_2(0))).
\end{align*}

A drawback of the above construction is the fact that we are only able to compare solutions $(u_1, \mu_1)$ and $(u_2,\mu_2)$ with the same energy, viz.~$\mu_1(\Real)=\mu_2(\Real)=C$. The rest of this section is therefore devoted to overcoming this limitation.

A closer look at the system \eqref{sys:ode:metric} of ordinary differential equations  reveals that we can rescale $\chi(t,\eta)$ and $\U(t,\eta)$ in the following way. Let
\begin{align*}
\hat \chi(t,\eta)&= \chi(t, C\eta),\\
\hat \U(t,\eta)&= \U(t, C\eta),
\end{align*}
which  also covers the case $\mu(\Real)=0$, (which would correspond to the zero solution). Then $\hat\chi(t,\dott)\colon [0,1]\to\Real$, $\hat \U(t,\dott)\colon[0,1]\to\Real$ for all $t$ and 
\begin{align*}
\hat\chi_t(t,\eta)& = \hat \U(t,\eta),\\
\hat \U_t(t,\eta)& = \frac12C(\eta-\frac12).  
\end{align*}
Direct computations then yield
\begin{align*}
\norm{\hat \U_1(t,\dott)-\hat \U_2(t,\dott)}_{L^\infty([0,1])}
& \leq\norm{\hat \U_1(0,\dott)-\hat \U_2(0,\dott)}_{L^\infty([0,1])}+\frac12 t\vert C_1-C_2\vert \norm{\eta-\frac12}_{L^\infty([0,1])}\\
& \leq \norm{\hat \U_1(0,\dott)-\hat \U_2(0,\dott)}_{L\infty([0,1])}+\frac14 t \vert C_1-C_2\vert \notag
\end{align*}
and 
\begin{align*}
\norm{\hat\chi_1(t,\dott)-\hat \chi_2(t,\dott)}_{L^1([0,1])}
& \leq \norm{\hat \chi_1(0,\dott)-\hat\chi_2(0,\dott)}_{L^1([0,1])}+ t \norm{\hat \U_1(0,\dott)-\hat \U_2(0,\dott)}_{L^\infty([0,1])}\\
&\quad +\frac18 t^2\vert C_1-C_2\vert. 
\end{align*}
Thus introducing the redefined distance by 
\begin{align*}
d((u_1,\mu_1),(u_2,\mu_2))&=\norm{\chi_1(C_1\dott)-\chi_2(C_2\dott)}_{L^1([0,1])}\\
& \quad + \norm{u_1(\chi_1(C_1\dott))-u_2(\chi_2(C_2\dott))}_{L^\infty([0,1])} \notag\\
& \quad + \vert C_1-C_2\vert, \notag
\end{align*}
we end up with 
\begin{align*}
d((u_1(t),\mu_1(t))&,(u_2(t), \mu_2(t)))\\
&\leq  \left(1+t+\frac18 t^2\right)d((u_1(0),\mu_1(0)),(u_2(0), \mu_2(0))). \notag
\end{align*}
In particular, $ d((u_1,\mu_1),(u_2,\mu_2))=0$ immediately implies that $C_1=\mu_1(\Real)=\mu_2(\Real)=C_2$, which then implies $\chi_1(t,x)=\chi_2(t,x)$ and thus $u_1(t,x)=u_2(t,x)$. 

\begin{example}
Recall Example \ref{rem:soliton}.  Consider initial data $u_0=0$ and $\mu_{0,i}=\alpha_i\delta_0$ yielding solutions $(u_i(t),\mu_i(t))$.
Here we find
\begin{equation*}
d((u_0,\mu_{0,1}),(u_0, \mu_{0,2}))= d((0,\alpha_1\delta_0),(0, \alpha_2\delta_0))=\abs{\alpha_1-\alpha_2}.
\end{equation*}
Thus
\begin{equation*}
d((u_1(t),\mu_1(t)),(u_2(t), \mu_2(t)))\leq  \left(1+t+\frac18 t^2\right)\abs{\alpha_1-\alpha_2}.
\end{equation*}
\end{example}

\begin{theorem}\label{thm:main}
Consider $u_{0,j}$ and $\mu_{0,j}$ as in Theorem  \ref{thm:existence} for $j=1,2$ with $C_j=\mu_{0,j}(\Real)$.  Assume in addition that
\begin{equation}\label{eq:cond1A}
\int_{-\infty}^0 F_{0,j}(x)dx +\int_0^\infty (C_j-F_{j,0}(x))dx <\infty, \quad j=1,2.
\end{equation} 

Define the metric
\begin{align*}
d((u_1(t),\mu_1(t)), &(u_2(t),\mu_2(t))) \\
&\qquad = \norm{\U_1(t,C_1\dott)-\U_2(t,C_2\dott)}_{L^\infty([0,1])}\\
&\qquad\quad+\norm{\chi_1(t,C_1\dott)-\chi_2(t,C_2\dott)}_{L^1([0,1])}
+\abs{C_1-C_2}.
\end{align*}
Then we have
\begin{equation*}
d((u_1(t),\mu_1(t)) ,(u_2(t),\mu_2(t)))
 \leq \left(1+t+\frac18 t^2\right)d((u_{0,1},\mu_{0,1}),(u_{0,2},\mu_{0,2})).
\end{equation*}
\end{theorem}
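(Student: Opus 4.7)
The plan is to deduce the Lipschitz estimate directly from the explicit solutions of the linear ODE system \eqref{sys:ode:metric}, which has already been rigorously established for each pair $(\chi_j(t,\cdot),\U_j(t,\cdot))$ in the preceding pages. The only new element beyond the $C_1=C_2$ discussion given just above the theorem is the rescaling onto a common parameter interval, followed by a short check that hypothesis \eqref{eq:cond1A} makes the initial distance finite.

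First I introduce the rescaled functions $\hat\chi_j(t,\eta)=\chi_j(t,C_j\eta)$ and $\hat\U_j(t,\eta)=\U_j(t,C_j\eta)$ on $\eta\in[0,1]$. The chain rule turns \eqref{sys:ode:metric} into
\begin{align*}
\hat\chi_{j,t}(t,\eta)&=\hat\U_j(t,\eta),\qquad
\hat\U_{j,t}(t,\eta)=\tfrac{C_j}{2}\bigl(\eta-\tfrac12\bigr),
\end{align*}
which integrates explicitly to
\begin{align*}
\hat\U_j(t,\eta)&=\hat\U_j(0,\eta)+\tfrac{tC_j}{2}\bigl(\eta-\tfrac12\bigr),\\
\hat\chi_j(t,\eta)&=\hat\chi_j(0,\eta)+t\,\hat\U_j(0,\eta)+\tfrac{t^2 C_j}{4}\bigl(\eta-\tfrac12\bigr).
\end{align*}
Subtracting the formulas for $j=1$ and $j=2$ at fixed $t$ produces the affine-in-initial-data identities
\begin{align*}
\hat\U_1(t,\eta)-\hat\U_2(t,\eta)&=\bigl(\hat\U_1(0,\eta)-\hat\U_2(0,\eta)\bigr)+\tfrac{t}{2}(C_1-C_2)\bigl(\eta-\tfrac12\bigr),\\
\hat\chi_1(t,\eta)-\hat\chi_2(t,\eta)&=\bigl(\hat\chi_1(0,\eta)-\hat\chi_2(0,\eta)\bigr)+t\bigl(\hat\U_1(0,\eta)-\hat\U_2(0,\eta)\bigr)+\tfrac{t^2}{4}(C_1-C_2)\bigl(\eta-\tfrac12\bigr).
\end{align*}

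Next I take the $L^\infty([0,1])$-norm of the first identity and the $L^1([0,1])$-norm of the second, using the trivial pointwise bound $\abs{\eta-\tfrac12}\leq\tfrac12$ on $[0,1]$, and add the invariant contribution $\abs{C_1-C_2}$ on both sides. Collecting the three terms into the quantity $d(\cdot,\cdot)$ at times $0$ and $t$, and matching the coefficients of $\norm{\hat\U_1(0,\cdot)-\hat\U_2(0,\cdot)}_{L^\infty}$, $\norm{\hat\chi_1(0,\cdot)-\hat\chi_2(0,\cdot)}_{L^1}$ and $\abs{C_1-C_2}$ in the resulting bound, one reads off precisely the multiplier $1+t+\tfrac{1}{8}t^2$ claimed.

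Finally, I would verify that all three norms defining $d$ are finite at $t=0$, since the explicit formulas above then propagate the finiteness to every $t>0$. The $L^\infty$-bound on $\hat\U_j(0,\cdot)$ follows from $u_{0,j}\in H^1(\Real)\subset L^\infty(\Real)$. The key point is $\hat\chi_j(0,\cdot)\in L^1([0,1])$: by the layer-cake identity
\begin{equation*}
\int_0^{C_j}\abs{\chi_j(0,\eta)}\,d\eta=\int_{\Real}\abs{x}\,d\mu_{0,j}(x)=\int_{-\infty}^0 F_{0,j}(x)\,dx+\int_0^\infty\bigl(C_j-F_{0,j}(x)\bigr)\,dx,
\end{equation*}
hypothesis \eqref{eq:cond1A} is exactly the condition for the right-hand side to be finite, and a change of variable then gives $\hat\chi_j(0,\cdot)\in L^1([0,1])$. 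The main ``obstacle'' in the proof is thus nothing more than bookkeeping of these coefficients; the real technical content—justifying the linear ODE system \eqref{sys:ode:metric} in the non-smooth regime via the Lagrangian detour through $\bar X(t)$ and $l(t,\eta)$—has already been carried out in the preceding sections of the paper.
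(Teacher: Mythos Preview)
Your derivation of the Lipschitz constant is correct and follows exactly the computation the paper carries out in the paragraphs immediately preceding the theorem; there is nothing to add on that front.

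Where you differ from the paper is in the treatment of the integrability of $\chi_j(t,\cdot)$. The paper's proof inside the \texttt{proof} environment is devoted entirely to this point: it shows that condition \eqref{eq:cond1A} is equivalent to $\chi_j(0,\cdot)\in L^1([0,C_j])$ (as you do), but then, to propagate integrability in time, it translates \eqref{eq:cond1A} back into Lagrangian variables as $\int_{-\infty}^{\xi_1}\bar H\bar y_\xi\,d\xi+\int_{\xi_1}^{\infty}(C-\bar H)\bar y_\xi\,d\xi<\infty$ and estimates these integrals at time $t$ using the explicit formulas for $\bar y_\xi(t,\xi)$ together with $\bar U_\xi^2=\bar H_\xi\bar y_\xi$. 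Your argument is genuinely simpler: once one has the explicit solution $\hat\chi_j(t,\eta)=\hat\chi_j(0,\eta)+t\,\hat\U_j(0,\eta)+\tfrac{t^2C_j}{4}(\eta-\tfrac12)$, the triangle inequality in $L^1([0,1])$ gives $\|\hat\chi_j(t,\cdot)\|_{L^1}\le \|\hat\chi_j(0,\cdot)\|_{L^1}+t\|\hat\U_j(0,\cdot)\|_{L^\infty}+\tfrac{t^2C_j}{8}$ directly, without any detour through $(\bar y,\bar U,\bar H)$. Both routes are valid; yours buys brevity, while the paper's calculation has the mild advantage of making explicit how the Eulerian tail condition \eqref{eq:cond1A} evolves in the original Lagrangian picture.
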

\begin{proof}
It is left to show that $\chi_j(t)\in L^1([0,C_j])$ for all $t$ positive and $j=1,2$. For the remainder of this proof, fix $j$ and drop it in the notation. 

Note that \eqref{eq:cond1A} is equivalent to $\chi(0,\eta)\in L^1([0,C])$. Indeed, denote by $\eta_1$ the point at which $\chi(0,\eta)$ changes from negative to positive, then by definition
\begin{equation*}
\chi(0,\eta)=\sup\{x\mid F(0,x)<\eta\},
\end{equation*}
and thus 
\begin{align*}
\norm{\chi}_{L^1([0,C])}&= \int_{0}^C \vert \chi(\eta)\vert d\eta =-\int_0^{\eta_{1}}\chi(\eta)d\eta+\int_{\eta_{1}}^C \chi(\eta)d\eta\\
&= \int_{-\infty}^0 F(0,x) dx+\int_0^\infty (C-F(x))dx.
\end{align*}

It remains to show that $\chi(t)$  remains integrable, i.e., $\chi(t)\in L^1([0,C])$ for all $t$ positive. Translating the condition \eqref{eq:cond1A} we find
\begin{equation}\label{eq:cond2a}
\int_{-\infty}^{\xi_1} \bar H\bar y_\xi(\xi)d\xi+\int_{\xi_1}^\infty (C-\bar H)\bar y_\xi(\xi)d\xi <\infty,
\end{equation}
where $\xi_1$ is chosen such that $\bar y(\xi_1)=0$. Note that  it does not matter if there exists a single point or a whole interval such that $\bar y(\xi)=0$, since in the latter case $\bar y_\xi(\xi)=0$.
Denote by $\xi(t)$ the time dependent function such that $\bar y(t,\xi(t))=0$ for all $t$, which is not unique. 
Then the first term can be rewritten as 
\begin{align*}
\int_{-\infty}^{\xi(t)} & \bar H(t,\xi)\bar y_\xi(t,\xi)d\xi = \int_{-\infty}^{\xi(t)} \bar H(0,\xi)\bar y_\xi(t,\xi)d\xi\\
& = \int_{-\infty}^{\xi(t)}\bar H(0,\xi)(\bar y_\xi(0,\xi)+t\bar U_\xi(0,\xi)+\frac14 t^2 \bar H_\xi(0,\xi))d\xi,\\
& \leq \int_{-\infty}^{\xi(t)}(1+t)\bar H\bar y_\xi(0,\xi)d\xi +\int_{-\infty}^{\xi(t)}(t+\frac14 t^2) \bar H\bar H_\xi(0,\xi)d\xi\\
& = (1+t) \int_{-\infty}^{\xi(0)} \bar H\bar y_\xi(0,\xi) d\xi+ (1+t) \int_{\xi(0)}^{\xi(t)}\bar H \bar y_\xi(0,\xi)d\xi+ \left(\frac12 t+\frac18 t^2\right) H^2(0,\xi(t))\\
& \leq (1+t)\int_{-\infty}^{\xi(0)} \bar H\bar y_\xi(0,\xi)d\xi +(1+t)\int_{\xi(0)}^{\xi(t)}\bar H \bar y_\xi(0,\xi)d\xi+ \left(\frac12 t+\frac18 t^2\right) C^2,
\end{align*}
where we used \eqref{eq:tildenull} and that $\bar U^2_\xi(t,\xi)=\bar H_\xi \bar y_\xi(t,\xi)$. The term on the right hand side will be finite if we can show that the second integral on the right hand side is finite. Therefore observe that 
\begin{align*}
\int_{\xi(0)}^{\xi(t)} \bar H \bar y_{\xi}(0,\xi) d\xi& = \int_{\bar y(0,\xi(0))}^{\bar y(0,\xi(t))} F(x) dx = \int_0^{\bar y(0,\xi(t))} F(x) dx = \int_{\bar y(t,\xi(t))}^{\bar y(0,\xi(t))} F(x)dx \\
& \leq C\vert y(t,\xi(t))-\bar y(0,\xi(t))\vert \leq C( t \vert U(0,\xi(t))\vert + \frac{t^2}{8} C)\\
& \leq C(t \norm{u_0}_{L^\infty(\Real)}+\frac{t^2}{8} C)<\infty.
\end{align*}

Similar considerations yield that the second integral in \eqref{eq:cond2a} remains finite as time evolves.

\end{proof}
\begin{remark}
Observe that the distance introduced in Theorem \ref{thm:main} gives at most a quadratic growth in time, while the distance in \cite{BHR} has at most an exponential growth in time.
\end{remark}
We make a comparison with the more complicated  Camassa--Holm equation in the next remark.
\begin{remark}
Consider an interval $[\xi_1,\xi_2]$ such that $U_0(\xi)=U_0(\xi_1)$ and $H(\xi_1)=H(\xi)$ for all $\xi\in[\xi_1,\xi_2]$. This property will  remain true for all later times. In particular, this means that these intervals do not show up in our metric, 
and the function $\chi(t,\eta)$ always has a constant jump at the corresponding point $\eta$. This is in big contrast to the Camassa--Holm equation where jumps in $\chi(t,\eta)$ may be created and then subsequently disappear immediately again. Thus the construction for the Camassa--Holm equation is much more involved than the HS construction.
\end{remark}
This is illustrated in the next examples. 

\begin{example}
Given the initial data $(u_0,\mu_0)=(0, \delta_0+2\delta_1)$, direct calculations yield 
\begin{align*}
u(t,x)&=\begin{cases}
-\frac34 t, & x\leq -\frac38 t^2,\\
\frac2{t} x, & -\frac38 t^2\leq x\leq -\frac18 t^2,\\
-\frac14 t, & -\frac18 t^2\leq x\leq 1-\frac18 t^2,\\
\frac2{t} (x-1), & 1-\frac18 t^2\leq x\leq 1+\frac38 t^2,\\
\frac34 t, & 1+\frac38 t^2\leq x,
\end{cases}\\
F(t,x) & = \begin{cases}
 0, & x\leq -\frac38 t^2,\\
 \frac32 +\frac4{t^2}x, & -\frac38 t^2\leq x\leq -\frac18 t^2,\\
 1, & -\frac18 t^2\leq x\leq 1-\frac18 t^2,\\
 \frac32 +\frac4{t^2} (x-1), & 1-\frac18  t^2\leq x\leq 1+\frac38 t^2,\\
 3, & 1+\frac38 t^2\leq x.
 \end{cases}
 \end{align*}
 Calculating the pseudo inverse $\chi(t,\eta)$ and $\U(t,\eta)=u(t,\chi(t,\eta))$ for each $t$, then yields
 \begin{align*}
\chi(t,\eta)& = \begin{cases}
-\infty, & \eta=0,\\
\frac{t^2}4(\eta-\frac32), & 0<\eta\leq 1,\\
1+\frac{t^2}{4}(\eta-\frac32), & 1<\eta\leq 3,
\end{cases}\\
\U(t,\eta)& = 
\frac{t}2 (n-\frac32),\quad \text{ for all } \eta\in[0,3].
\end{align*} 
Here two observations are important. Note that $\U(t,\eta)$ is continuous and differentiable with respect to $\eta$, while $\chi(t,\eta)$ on the other hand has at each time $t$ a discontinuity at $\eta=1$ (and of course at $\eta=0$). In particular, one has 
\begin{equation*}
\lim_{\eta\to 1-}\chi(t,\eta)=-\frac18 t^2\quad \text{ and } \quad \lim_{\eta\to 1+} \chi(t,\eta)=1-\frac18 t^2.
\end{equation*}
Thus the jump in function value remains unchanged even if the limit from the left and the right are time dependent.

In order to understand the behavior of $l(t,\eta)$, let us have a look at the solution in Lagrangian coordinates, which is given by 
\begin{align*}
y(t,\xi) &= \begin{cases}
\xi-\frac38 t^2, & \quad \xi\leq 0,\\
\frac14 (\xi-\frac32)t^2, & \quad 0\leq \xi\leq 1,\\
\xi-1-\frac18 t^2, & \quad 1\leq \xi\leq 2,\\
1+\frac14 (\xi-\frac52)t^2, & \quad 2\leq \xi\leq 4,\\
\xi-3+\frac38 t^2, & \quad 4\leq\xi,
\end{cases} \\
U(t,\xi) &= \begin{cases}
-\frac34 t, & \quad \xi\leq 0,\\
\frac12 (\xi-\frac32)t, & \quad 0\leq \xi\leq 1,\\
-\frac14 t, & \quad 1\leq\xi\leq 2,\\
\frac12 (\xi-\frac52)t,& \quad  2\leq\xi\leq 4,\\
\frac34 t, & \quad 4\leq \xi,
\end{cases} \\
H(t,\xi)&=\begin{cases}
0, & \quad \xi\leq 0,\\
\xi, & \quad 0\leq \xi\leq 1,\\
1, & \quad 1\leq \xi\leq 2,\\
\xi-1, & \quad 2\leq\xi\leq 4,\\
3, & \quad 4\leq \xi.
\end{cases}
\end{align*}
Hence direct computations yield that 
\begin{equation*}
l(t,\eta)=l(0,\eta)=\begin{cases}
-\infty, & \quad \eta=0,\\
\eta, &\quad 0<\eta \leq1,\\
\eta+1,&\quad 1< \eta \leq 3 .
\end{cases}
\end{equation*}
Note that $l(0,\eta)$ is non-constant on the intervals where both $\U(0,\eta)$ and $\chi(0,\eta)$ are constant.

\end{example}


\end{document}